\DeclareFontFamily{U}{solomos}{}
\DeclareFontShape{U}{solomos}{m}{n}{
  <-> s*[1.1]  gsolomos8r
}{}
\long\def\comment#1\endcomment{}
\theoremstyle{plain}
\newcommand*{\doublerightarrow}[2]{\mathrel{
  \settowidth{\@tempdima}{$\scriptstyle#1$}
  \settowidth{\@tempdimb}{$\scriptstyle#2$}
  \ifdim\@tempdimb>\@tempdima \@tempdima=\@tempdimb\fi
  \mathop{\vcenter{
    \offinterlineskip\ialign{\hbox to\dimexpr\@tempdima+1em{##}\cr
    \rightarrowfill\cr\noalign{\kern.5ex}
    \rightarrowfill\cr}}}\limits^{\!#1}_{\!#2}}}
\newtheorem{theorem}{\sc Theorem}[section]
\newtheorem{lemma}[theorem]{\sc Lemma}
\newtheorem{prop}[theorem]{\sc Proposition}
\theoremstyle{plain}
\newtheorem{defn}[theorem]{\sc Definition}
\theoremstyle{exercise}
\newtheorem{remark}[theorem]{\sc Remark}
\newtheorem{example}[theorem]{\sc Example}
\makeatletter \@addtoreset{equation}{section} \makeatother
\def\eqref#1{\thetag{\ref{#1}}}
\let\latexref=\ref
\def\ref#1{{\normalfont{\latexref{#1}}}}
\newcommand{\ldot}{{\:\raisebox{2,3pt}{\text{\circle*{1.5}}}}}
\newcommand{\udot}{{\:\raisebox{3pt}{\text{\circle*{1.5}}}}}
\def\dlim_#1{{\displaystyle\lim_{#1}}^\hdot}
\newcommand{\id}{\operatorname{\rm id}}
\newcommand{\Mor}{\mathrm{Mor}}
\newcommand{\Ob}{\mathrm{Ob}}
\newcommand{\opp}{\mathrm{opp}}
\newcommand{\Hom}{\mathrm{Hom}}
\newcommand{\Hoch}{\mathrm{Hoch}}
\newcommand{\dg}{\mathrm{dg}}
\newcommand{\Cone}{\mathrm{Cone}}
\newcommand{\Sets}{\mathscr{S}ets}
\newcommand{\Cat}{{\mathscr{C}at}}
\renewcommand{\k}{\Bbbk}
\newcommand{\pprime}{{\prime\prime}}
\newcommand{\tot}{\mathrm{tot}}
\newcommand{\coh}{\mathrm{coh}}
\newcommand{\Coh}{{\mathscr{C}{oh}}}
\newcommand{\sotimes}{\overset{\sim}{\otimes}}
\newcommand{\Ord}{\mathrm{Ord}}
\newcommand{\Tot}{\mathrm{Tot}}
\newcommand{\Sym}{\mathrm{Sym}}
\renewcommand{\min}{\mathrm{min}}
\renewcommand{\max}{\mathrm{max}}
\newcommand{\Op}{\mathrm{Op}}
\newcommand{\Nor}{\mathrm{Nor}}
\newcommand{\Tree}{\mathbf{Tree}}
\newcommand{\Out}{\mathrm{Out}}
\newcommand{\Symm}{\mathrm{Symm}}
\newcommand{\Des}{\mathrm{Des}}
\newcommand{\Trees}{\mathbf{Tree}}
\newcommand{\dom}{\mathrm{dom}}
\newcommand{\Glob}{\mathbf{Glob}}
\newcommand{\sevafigc}[4]{\begin{figure}[h]\centerline{
 \epsfig{file=#1,width=#2,angle=#3}}
\bigskip\caption{#4}\end{figure}}
\title{\sc{On comparison of the Tamarkin and the twisted tensor product 2-operads}}
\author{\sc{Boris Shoikhet}}
\date{}
\begin{document}\maketitle
{\footnotesize
\begin{center}{\parbox{4,5in}{{\sc Abstract.}
There are known two different constructions of contractible dg 2-operads, providing a weak 2-category structure on the following dg 2-quiver of small dg 2-categories. Its vertices are small dg 2-categories over a given field, arrows are dg functors, and the 2-arrows $F\Rightarrow G$ are defined as the Hochschild cochains of $C$ with coefficients in $C$-bimodule $D(F(-),G(=))$, where $F,G\colon C\to D$ are dg functors, $C,D$ small dg categories. It is known that such definition is correct homotopically, but, on the other hand, the corresponding dg 2-quiver fails to be a strict 2-category.
 The question ``What do dg categories form'' is the question of finding a weak 2-category structure on it, in an appropriate sense. 
One way of phrasing it out is to make it an algebra over a contractible 2-operad, in the sense of M.Batanin [Ba1,2] (in turn, there are many compositions of 2-arrows for a given diagram, but their totality forms a contractible complex) . 

In [T], D.Tamarkin proposed a contractible $\Delta$-colored 2-operad in Sets, whose dg condensation solves the problem. 
In our recent paper [Sh2], we constructed contractible dg 2-operad, called the twisted tensor product operad, acting on the same 2-quiver  (the construction uses the twisted tensor product of small dg categories [Sh1]).  

The goal of this paper is twofold. At first,
we establish links  between the twisted tensor product dg 2-operad and the dg condensation of the Tamarkin simplicial 2-operad.
At second, we develop computational tools, generalising some constructions of [BBM] from the case of symmetric $\Delta$-colored lattice path operad [BB] to the case of $\Delta$-colored  Tamarkin 2-operad.

We construct a 2-operadic analogue of the dg operad $\mathcal{B}r$ from [BBM] (called $\mathbf{Breq}$), and prove that it is quasi-isomorphic both to its normalised quotient $\Nor(\mathbf{Breq})$ and to the dg condensation of the Tamarkin simplicial 2-operad. We prove the contractibility 
of the 2-operad $\Nor(\mathbf{Breq})$ directly. As well, we establish an isomorphism of dg 2-operads between $\Nor(\mathbf{Breq})$ and the twisted tensor product 2-operad (the contractibility of the latter operad is proven in [Sh2] by homotopical methods). As a byproduct, we provide two new proofs of contractibility of the dg condensation of Tamarkin 2-operad.

}}
\end{center}
}

\section*{\sc Introduction}
Throughout the paper, $\k$ is a field of characteristic 0. 

In their pioneering paper [GJ], Getzler and Jones found, among the other things, an operad called $B_\infty$, whose quotient, called the brace operad, acts on the cohomological Hochschild complex of any dg algebra (and, more generally, of any dg category). \footnote{The Getzler-Jones brace operad is $\Nor(\mathcal{B}r_2)$, in notations of [BBM].} 
The cohomology of the Getzler-Jones brace operad is isomorphic, an a graded operad, to $e_2=H_\udot(E_2,\k)$, and the operad itself is quasi-isomorphic to $e_2$ (although the latter statement is much harder and is one of the versions of the Deligne conjecture). 

Inspired by McClure-Smith approach to the Deligne conjecture [MS1,2], Batanin and Berger [BB] introduced a simplicial operad, called the lattice path operad, equipped with an ascending operad filtration, with filtration components $\mathcal{L}_n$. 
It is proven [BB] that the topological condensation of $\mathcal{L}_n$ is a $E_n$-operad, and the dg condensation of $\mathcal{L}_n$ is a $C_\ldot(E_n;\k)$-operad. 
Thus the lattice path operad with its filtation provides an explicit realisation of the operad $E_\infty$ with its ascending filtration by operads $E_n$ [B]. In [BBM], a dg brace operad in any dimension $n$ was introduced, as
$$
\mathcal{B}r_n(\ell)^{-p}=\bigoplus_{k_1+\dots+k_\ell=p}\mathcal{L}_n(k_1,\dots,k_\ell; 0)
$$
with the differential coming from the elementary face operations. 

A priori $\mathcal{B}r_n$ is not an operad, and is made an operad via 
the {\it whiskering map} [BBM, Sect. 3]
$$
w\colon \mathcal{B}r_n(\ell)\to |\mathcal{L}_n|(\ell)
$$
where $|-|$ stands for the non-normalised dg condensation,
$$
|X|^{-p}=\prod_{s\ge 0}\bigoplus_{k_1+\dots+k_\ell-s=p}X(k_1,\dots,k_\ell; s)
$$

In this paper, we consider a similar to [BBM] paradigm for the case of $n$-operads (restricting ourselves by the case $n= 2$), when the Tamarkin simplicial 2-operad $\mathbf{seq}$ (see Section 1) replaces the second stage $\mathcal{L}_2$ of the lattice path operad. 
We construct the corresponding brace 2-operad (called $\mathbf{Breq}$),\footnote{The name originates from author's attempt to hybridize {\it brace} and {\it seq}} and a whiskering map $\mathbf{Breq}\to |\mathbf{seq}|$ (which makes $\mathbf{Breq}$ a dg 2-operad).
We also consider the normalised Moore complex components $\Nor(\mathbf{Breq})$, which give rise to another dg 2-operad, and prove that the two maps
$$
|\mathbf{seq}|\overset{w}{\leftarrow}\mathbf{Breq}\to\Nor(\mathbf{Breq})
$$
are quasi-isomorphisms of dg 2-operad (Proposition \ref{whiskprop}).

We show that the operad $\Nor(\mathbf{Breq})$ has very manageable components, whose contractibility to $\k[0]$ can be proved 
by a direct computation (Theorem \ref{theoremb3}). As a consequence, it follows that (the dg condensation of) Tamarkin 2-operad $|\mathbf{seq}|$ is contractible. 

In [Sh2], we gave a new dg 2-operad acting on the 2-quiver $\Cat_\dg^\coh(\k)$, which looked much smaller than (the dg condensationof ) the Tamarkin 2-operad $\mathbf{seq}$. It was defined via the twisting tensor product [Sh1], and is denoted by $\mathcal{O}$. (We recall the construction in Section 5). 
We identify the dg 2-operad $\mathcal{O}$ with the dg 2-operad $\Nor(\mathbf{Breq})$, so that the two 2-operads become isomorphic. 
Then Proposition 5.7 of [Sh2] (which says that the twisted tensor product operad is contractible) and Proposition \ref{ocontr} together provide another proof of contractibility for the dg condensation of the Tamarkin 2-operad $\mathbf{seq}$. 

\subsection*{Acknowledgements}
 The author is thankful to the anonymous referee of [Sh2] who asked him on possible links between the twisted product 2-operad $\mathcal{O}$ and $\mathbf{seq}$, and asked whether the whiskering map from [BBM] can be defined in the 2-operadic context. This paper grew up from an attempt to address this question.

\vspace{1mm}

The work was financed 
by the Support grant
for International Mathematical Centres Creation and Development, by the Ministry of Higher Education and Science
of Russian Federation and PDMI RAS agreement № 075-15-2022-289 issued on April 6, 2022.


\section{\sc Tamarkin's 2-operad}\label{tamopapp1}
\subsection{\sc Reminder on 2-operads}
Here we fix our notations in 2-operads, the reader is referred to [Ba1,2], [T], [BM1,2] for more detail. 
\subsubsection{}
An $n$-operad for $n=1$ is just a non-symmetric (non-$\Sigma$) operad. Recall that a non-$\Sigma$ operad $\mathcal{O}$ in a monoidal category $M$ is given by its arity components $\mathcal{O}(0), \mathcal{O}(1),\mathcal{O}(2),\dots$ which are elements in $M$, along with  {\it operadic composition}
\begin{equation}\label{eqop1}
m\colon \mathcal{O}(k)\otimes\mathcal{O}(\ell_1)\otimes\dots\otimes \mathcal{O}(\ell_k)\to \mathcal{O}(\ell_1+\dots+\ell_k)
\end{equation}
and the unit $\id_1\in \mathcal{O}(1)$. 

The operadic composition can be viewed as been associated with a map of ordinals $p\colon [\ell_1+\dots+\ell_k-1]\to [k-1]$ defined as $p^{-1}(i)=\ell_{i+1}$. Namely, to any map of ordinals $\phi\colon [m-1]\to [k-1]$ one associates the set of {\it fibers} $\{\phi^{-1}(i)\}_{i\in [k-1]}$, and the operadic composition is a map
\begin{equation}\label{eqop2}
\mathcal{O}([k-1])\otimes \bigotimes_{i\in [k-1]}\mathcal{O}(\phi^{-1}(i))\to \mathcal{O}([m-1])
\end{equation}
The fibers $\{\phi^{-1}(i)\}_{i\in [k-1]}$ are thought of as ordinals again. If the map $\phi$ is surjective, all fibers are non-empty, $\ell_i\ge 1$. In general, the surjectivity of $\phi$ is not required, and $\mathcal{O}(\emptyset)$ is considered as $\mathcal{O}(0)$, in this way one can talk about {\it algebra units} (an element $\id_0\in\mathcal{O}(0)$ acts on any $\mathcal{O}$-algebra $V$ as the substitution of the unit $1_V\in V$).

The operadic composition and operad identity are subject to the following axioms:
\begin{itemize}
\item[(1)] the associativity of the operadic composition,
\item[(2)] for any $k\ge 1$, $m(\mathcal{O}(k), \id_1,\dots,\id_1)=\id_{\mathcal{O}(k)}$,
\item[(3)] for any $k\ge 0$, $m(\id_1\otimes \mathcal{O}(k))=\id_{\mathcal{O}(k)}$
\end{itemize}

Roughly speaking, the idea of higher operads of Batanin was to consider, for each $n\ge 1$, a category $\Tree_n$, such that $\Tree_1=\Delta$, in which fibers are defined, and mimick the definition \eqref{eqop2} (see below). An $n$-operad $\mathcal{O}$ is defined as a sequence of objects $\{\mathcal{O}(T)\}_{T\in\Tree_n}$, such that \eqref{eqop2} holds, and with suitably defined unit and compatibilities. In particular, the composition \eqref{eqop2} should be associative.

For the case $n=1$, an action of a non-$\Sigma$ operad $\mathcal{O}$ on $V\in M$, where $M$ is a monoidal category with weak equivalences, such that, for any $k\ge 0$, $\mathcal{O}(k)$ is weakly equivalent to the final object $1\in M$, is considered as an $A_\infty$ monoid structure on $V$ (that is, as a structure of a weak associative algebra). 

The idea is that a weak $n$-algebra can be defined as an algebra over $n$-operad $\mathcal{O}$, such that for each $T\in \Tree_n$, $\mathcal{O}(T)$ is weakly equivalent to the final object $1\in M$. (More precisely, the collection of weak equivalences $\{\mathcal{O}(T)\to 1\}_{T\in\Tree_n}$ should be compatible with the operadic composition). In particular, if $\mathcal{O}(T)=1$ for any $T$, we recover strict $n$-algebras. See Theorem \ref{theoremm} (due to Batanin) below for the precise statement. 

\subsubsection{}
Denote by $\{k\}$ the underlying finite set of the ordinal $[k]$. 
The category $\Tree_n$ is defined as follows. Its object $T$ is an $n$-string of surjective maps in $\Delta$:
$$
T=[k_n-1]\xrightarrow{\rho_{n-1}} [k_{n-1}-1]\xrightarrow{\rho_{n-2}}\dots\xrightarrow{\rho_0}[0]
$$
Such $T$ is visualised as a $n$-level tree. An $n$-tree is called {\it pruned} if all $\rho_i$ are surjective. For a pruned $n$-tree, all its leaves are at the highest level $n$. The finite set of leaves of an $n$-tree $T$ is denoted by $|T|$.
The maps $\rho_i$ are referred to as the {\it structure maps} of an $n$-tree.

A morphism $F\colon T\to S$, where 
$$
S=[\ell_n-1]\xrightarrow{\xi_{n-1}} [\ell_{n-1}-1]\xrightarrow{\xi_{n-2}}\dots\xrightarrow{\xi_0}[0]
$$
is defined as a sequence of maps $f_i\colon \{k_i-1\}\to \{\ell_i-1\}$, $i=0,1,\dots,n$ (not monotonous, in general), which commute with the structure maps, and such that  
for each $0\le i\le n$ and each $j\in [k_{i-1}-1]$ the restriction of $f_{i}$ on $\rho_{i-1}^{-1}(j)$ is monotonous. That is, $f_i$ has to be monotonous when restricted to the fibers of the structure map $\rho_{i-1}$. 
It is clear that a map of $n$-trees is uniquely defined by the map $f_n$. Conversely, any map $f_n$ which is a map of {\it $n$-ordered sets}, associated with $n$-trees $S$ and $T$, defines a map of $n$-trees (see [Ba3, Lemma 2.3]). 

The fiber $F^{-1}(a)$ for a morphism $F\colon T\to S$, $a\in |S|$, is defined as the set-theoretical preimage of the linear subtree $\Out(a)$ of $S$ spanned by $a$. This linear subtree $\Out(a)$ is defined as follows. Let $a\in [\ell_i]$, then $\Out(a)$ has no vertices at levels $>i$, and the only vertex of $\Out(a)$ at level $j\le i$ is defined as $\rho_j\dots\rho_{i-2}\rho_{i-1}(a)$. Note that a fiber of a map of pruned $n$-trees is not necessarily a pruned $n$-tree, even if all components $\{f_i\}$ of the map $F$ are surjective, see Remark \ref{remprune}.

\begin{example}{\rm Consider the case $n=2$, $T=[3]\xrightarrow{\rho_1} [1] \to [0]$, $S=[1]\to [0]\to [0]$. Denote by $0<1$ the leaves of $S$. Define maps $F_1, F_2\colon T\to S$ as follows: $F_1(0)=F_1(1)=0, F_1(2)=F_1(3)=1$, and $F_2(0)=F_2(2)=0, F_2(1)=F_2(3)=1$. Both $F_1,F_2$ are maps of level trees. Note that the map $F_2$ is not defined via an ordinal map $f\colon [3]\to [1]$, as $f(1)>f(2)$. At the same time, the restriction of $f$ on each fiber $f^{-1}(i)$, $i=0,1$, is a map of ordinals. }
\end{example}

\begin{remark}\label{remprune}{\rm
(1) Let $T,S$ be pruned $n$-trees, $\sigma\colon T\to S$ a map of $n$-trees. Note that the fibers $F^{-1}(a)$, $a\in |S|$, needn't be pruned $n$-trees, even if the components $f_i$ are surjective. An example in shown in Figure \ref{fig1}. 
For a possibly non-pruned $n$-tree $T$, denote by $P(T)$ the maximal pruned $n$-subtree of $T$. By definition, it is the pruned $n$-tree generated by all level $n$ leaves of $T$, by ignoring the leaves at levels $<n$ as well as their descendants. We call $P(T)$ the {\it prunisation} of $T$.

\sevafigc{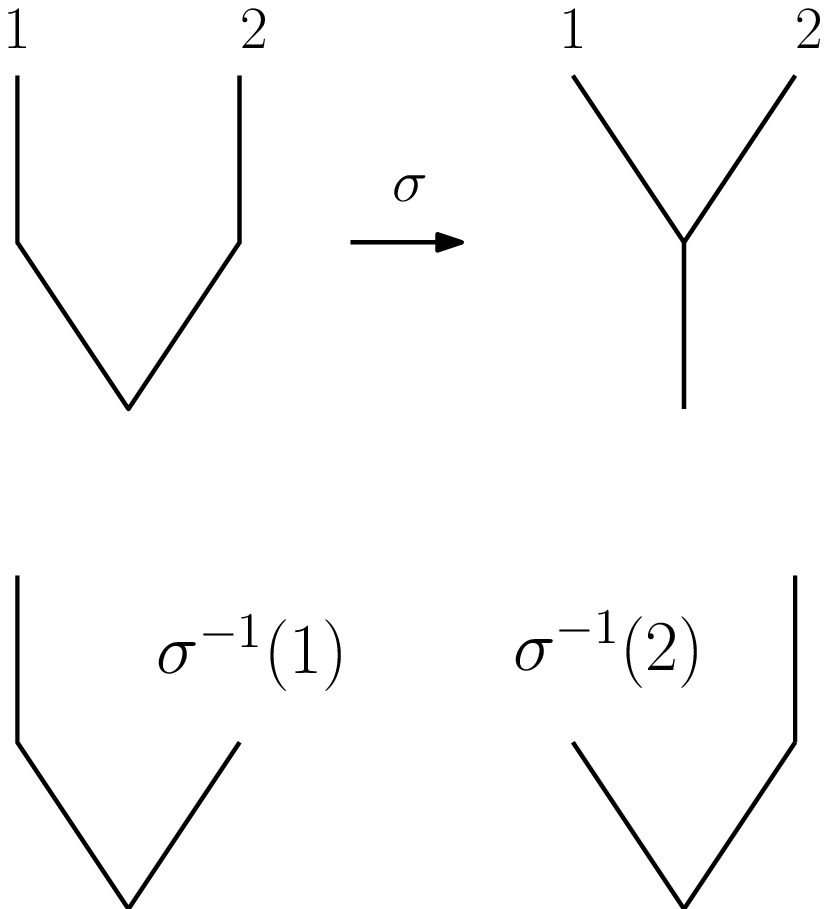}{45mm}{0}{\label{fig1}}

(2) As categories, the pruned $n$-trees and $n$-ordinals $\mathbf{Ord}_n$ [Ba2] Def. 2.2 are the same. On the other hand, as {\it operadic categories}
$\Tree_n$ and $\Ord_n$ are different: for a morphism $\sigma \colon T\to S$ of pruned $n$-trees, a fiber $\sigma_{\Ord_n}^{-1}(i)$ in $\Ord_n$ is defined as the prunisation $P(\sigma_{\Trees_n}^{-1}(i))$. 
}
\end{remark}

The linear pruned $n$-level tree $U_n$ (having a signle element at each level) is the final object in the category of pruned $n$-trees.

\subsubsection{}
We recall here the definition of a {\it pruned} {\it reduced} $n$-operad. In terminology of [Ba3], we consider here only {\it $(n-1)$-terminal} $n$-operads. The $(n-1)$-terminality makes us possible to restrict with $n$-operads taking values in a symmetric monoidal globular category $\Sigma^nV$, where $V$ is a closed symmetric monoidal category, see [Ba2], Sect. 5. By a slight abuse of terminology, we say that an operad takes values in the closed symmetric monoidal category $V$ (not indicating $\Sigma^nV$).

\begin{defn}{\rm

A pruned reduced $(n-1)$-terminal $n$-operad $\mathcal{O}$ in a symmetric monoidal category $V$ is given by an assignment $T\rightsquigarrow \mathcal{O}(T)\in V$, for a pruned $n$-tree $T$, so that for any {\it surjective} map $\sigma\colon T\to S$ of pruned $n$-trees, one is given the composition
\begin{equation}
m_\sigma\colon \mathcal{O}(S)\otimes \mathcal{O}(P(\sigma^{-1}(1)))\otimes\dots\otimes \mathcal{O}(P(\sigma^{-1}(k)))\to \mathcal{O}(T)
\end{equation}
where $k=|S|$ is the number of leaves of $S$, and $P(-)$ is the prunisation (which cuts all non-pruned branches, see Remark \ref{remprune}). It is subject to the following conditions (in which we assume that $V=C^\udot(\k)$ is the category of complexes of $\k$-vector spaces):
\begin{itemize}
\item[(i)] $\mathcal{O}(U_n)=\k$, and $1\in \k$ is the operadic unit,
\item[(ii)] the associativity for the composition of two surjective morphisms
$T\xrightarrow{\sigma}S\xrightarrow{\rho} Q$ of pruned $n$-trees, see [Ba2] Def. 5.1,
\item[(iii)] the two unit axioms, see [Ba2], Def. 5.1.
\end{itemize}

The category of pruned reduced $(n-1)$-terminal $n$-operads in a symmetric monoidal category $V$ is denoted by $\Op_n(V)$, or simply by $\Op_n$. 
}
\end{defn}

\begin{remark}{\rm
The idea behind the definition of pruned reduced operad is that algebras over such operads should be {\it strictly unital}. The fact that we can cut off all not pruned branches means that these redundant pieces act by (whiskering with) the identity morphism. When we deal with algebras with weak units, we have to consider more general $n$-operads. 
}
\end{remark}

\subsubsection{\sc }

Tamarkin [T] uses the Joyal dual description of the category $\Trees_n$ via Joyal $n$-disks. This description is more ``globular'', and it fits better for the questions of globular nature, such as for describing a weak 2-category structure on all dg categories, via a suitable 2-operad. Let us recall it, for simplicity restricting ourselves with the case $n=2$. 

A globular 2-diagram $D$ is given by  sets, $D_0, D_1, D_2$, and the following maps
\begin{equation}\label{globpres}
D_2\doublerightarrow{s_1}{t_1}D_1\doublerightarrow{s_0}{t_0}D_0
\end{equation}
such that $s_0s_1=s_0t_1$, $t_0s_1=t_0s_1$. A morphism $D\to D^\prime$ of two 2-globular diagrams is defined as collection of maps $\{D_i\to D_i^\prime\}_{i=0,1,2}$, which commute with all $s_j$ and $t_j$. 

A globular 2-diagram $D$ is considered as a quiver for generating a strict 2-category, which we denote by $\omega_2(D)$. The functor $D\rightsquigarrow\omega_2(D)$ is the left adjoint to the forgetful functor from the category of strict 2-categories to globular diagrams. 

\sevafigc{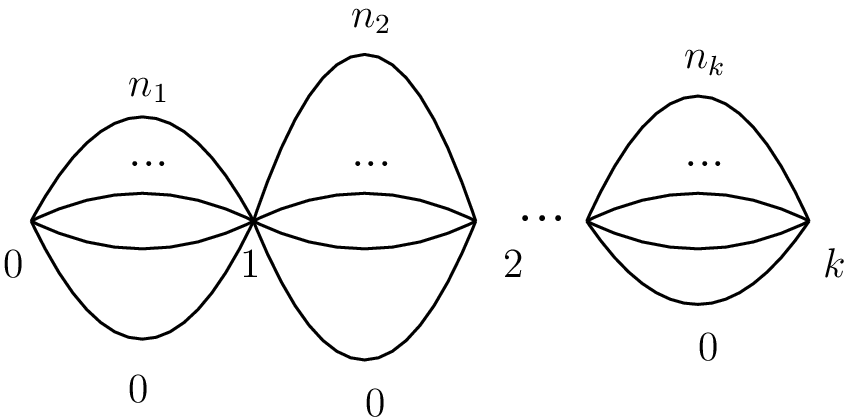}{65mm}{0}{The 2-globular diagram $D=(n_1,\dots,n_k)$\label{fig2} }

Assume we are given a globular diagram as at the Figure \ref{fig2}, then one associates to it the 2-tree 
$$T(D)= [n_1+\dots+n_k-1]\xrightarrow{\rho} [k-1]\to [0]$$
where $\sharp\{\rho^{-1}(i)\}=n_i$ and the map $\rho$ is monotonous surjective. The $n_1, n_2,\dots,n_k$ leaves at the corresponding branches of the tree $T(D)$ are thought of as the elementary generating 2-morphisms (thus, the vertical intervals) in $D$.
We denote such globular diagram by $D=(n_1,\dots,n_k)$. As we model the category of pruned 2-trees, we impose the conditions
$$
k\ge 1, n_i\ge 1 \text{  for } 1\le i \le k 
$$

We define a category whose objects are 2-globular diagrams. We define a map $f\colon D\to D^\prime$ as a strict 2-functor between the strict 2-categories $f\colon \omega_2(D)\to \omega_2(D^\prime)$ which is {\it dominant} (see Remark \ref{remberger} below) in the following sense. The objects of $\omega_2(D)$ are linearly ordered, and for any two objects $i\le j$, the set of 1-morphisms $\omega_2(D)(i,j)$ is partially ordered, with the minimal and the maximal elements. The dominance of $f$ means that $\omega_2)(f)$ preserves the minimal and the maximal vertices, and for any $i\le j\in \omega_2(D)$, $\omega_2(f)$ maps the minimal element of $\omega_2(D)(i,j)$ to the minimal element in $\omega_2(D^\prime)(f(i),f(j))$, and similarly for the maximal elements. We denote the category whose objects are 2-globular diagrams, and morphism are strict dominant 2-functors as above, by $\Glob_2^\dom$. For $D\in \Glob_2^\dom$, $D=(n_1,\dots,n_k)$, we assume that $k\ge 1$, $n_1,\dots,n_k\ge 1$. 

One has:
\begin{prop}\label{propduality}
The category $\Glob_2^\dom$ is anti-equivalent to the category $\Tree_2$ of pruned 2-trees.
\end{prop}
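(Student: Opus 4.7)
The plan is to construct a contravariant functor $\Phi\colon \Glob_2^\dom \to \Tree_2$ extending the object assignment $D=(n_1,\ldots,n_k) \mapsto T(D)=\bigl([\sum_i n_i -1]\xrightarrow{\rho}[k-1]\to[0]\bigr)$, with $\rho$ the monotone surjection of fiber sizes $(n_1,\ldots,n_k)$, and to check it is fully faithful. Essential surjectivity is automatic, since every pruned $2$-tree arises as $T(D)$ for a unique tuple $(n_1,\ldots,n_k)$.

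To define $\Phi$ on morphisms, I first observe that by freeness of $\omega_2(D)$ and $\omega_2(D')$ on their $2$-globular generators, any strict $2$-functor $f\colon\omega_2(D)\to\omega_2(D')$ is determined by its action on $0$-, $1$-, and $2$-cell generators. In these free $2$-categories each hom-category $\omega_2(D')(a',b')$ is a partially ordered set with a unique minimum $g'_{b',0}\cdots g'_{a'+1,0}$ and a unique maximum, and between any two parallel $1$-morphisms there is at most one $2$-morphism (interchange forces all ways of building it to coincide). Under these restrictions the dominance hypothesis forces $f$ to be encoded by:
\begin{itemize}
\item an order-preserving $f_{\mathrm{obj}}\colon\{0,\ldots,k\}\to\{0,\ldots,k'\}$ with $f_{\mathrm{obj}}(0)=0$ and $f_{\mathrm{obj}}(k)=k'$;
\item for each column $i$ with $a':=f_{\mathrm{obj}}(i-1)<b':=f_{\mathrm{obj}}(i)$, a monotone assignment of the generators $g_{i,0}<\cdots<g_{i,n_i}$ to $1$-morphisms in $\omega_2(D')(a',b')$ sending $g_{i,0}$ to the minimum and $g_{i,n_i}$ to the maximum (when $f_{\mathrm{obj}}(i-1)=f_{\mathrm{obj}}(i)$, the whole column $i$ collapses to an identity).
\end{itemize}
The generating $2$-cell images are then uniquely determined by the uniqueness of $2$-morphisms between parallel $1$-morphisms.

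From this data I build the tree morphism $F=\Phi(f)\colon T(D')\to T(D)$. Its column component $F_1\colon\{0,\ldots,k'-1\}\to\{0,\ldots,k-1\}$ sends $j'-1$ to $i-1$, where $i$ is the unique index with $f_{\mathrm{obj}}(i-1)<j'\le f_{\mathrm{obj}}(i)$; monotonicity of $F_1$, which is required because the fiber of the structure map $\rho_0$ at the unique point is the entire set $\{0,\ldots,k'-1\}$, is inherited from $f_{\mathrm{obj}}$. Its leaf component $F_2$ sends a $2$-cell generator $\alpha'_{j',l}$ of column $j'$ of $D'$ to the unique generator $\alpha_{i,m}$ of column $i=F_1(j'-1)+1$ of $D$ in whose vertical decomposition under $f$ the factor $\alpha'_{j',l}$ appears. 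Compatibility of $F$ with the structure maps is built in, monotonicity of $F_2$ on fibers of the structure map of $T(D')$ follows from the monotonicity in the second bullet above, and functoriality under composition is a direct verification.

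To see $\Phi$ is bijective on hom-sets, I invert the construction: from $F\colon T(D')\to T(D)$, the formula $f_{\mathrm{obj}}(i)=|\{j'\colon F_1(j')\le i-1\}|$ recovers the required order-preserving object map (the monotonicity of $F_1$ is exactly what makes this well-defined and order-preserving), and $F_2$ together with the compatibility with structure maps recovers the column-wise monotone assignments, from which a dominant $2$-functor $f$ is assembled. The two passages are mutually inverse by direct unpacking. The main technical obstacle lies in the case where $f_{\mathrm{obj}}$ ``skips'' objects, i.e.\ $f_{\mathrm{obj}}(i)>f_{\mathrm{obj}}(i-1)+1$, so that a single column of $D$ maps into $\omega_2(D')(a',b')$ via long horizontal composites of generators from several columns of $D'$; the dominance condition is exactly the constraint that forces such composites to be specified coherently by the monotone data of the second bullet, making the correspondence bijective rather than many-to-one.
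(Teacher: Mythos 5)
Your argument is correct, but it takes a genuinely different route from the paper, which gives no direct proof at all: it invokes [B, Prop.~2.2] and [T, 4.1.9--10], i.e.\ it obtains the statement by restricting Berger's duality between $\Theta_2$ and the category of Joyal $2$-disks to dominant $2$-functors on one side and to boundary-preserving disk maps (equivalently, $2$-level trees) on the other, as explained in Remark~\ref{remberger}. What you do instead is make the hom-set bijection completely explicit as an iterated interval duality: the classical duality between $\Delta$ and $\Delta_{fi}$ applied once at the level of objects (your formula $f_{\mathrm{obj}}(i)=|\{j'\colon F_1(j')\le i-1\}|$ and its inverse) and then once more, column by column, at the level of $2$-cell generators, where the image $f(g_{i,m})$ is recovered coordinatewise as $\#\{l\colon F_2(\alpha'_{j',l})\le \alpha_{i,m}\}$. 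This buys a self-contained proof and, more usefully, an explicit dictionary between dominant $2$-functors and pairs $(F_1,F_2)$ of the kind that is actually used in combinatorial computations; what it costs is generality (the $\Theta_n$/disk route works for all $n$ with no extra effort) together with two inputs you use without justification: first, that the hom-categories of $\omega_2(D)$ are the posets $[n_{a+1}]\times\cdots\times[n_b]$ --- this is exactly the standard description of the objects of $\Theta_2$ and should be cited or checked, since it is what underlies both your claim that there is at most one $2$-morphism between parallel $1$-morphisms and the claim that a dominant $2$-functor is determined by the column-wise monotone data (note also that order-preservation of $f_{\mathrm{obj}}$ is forced already by functoriality, since $\omega_2(D')(a',b')$ is empty for $a'>b'$); and second, the contravariant functoriality of your assignment, which you leave as ``a direct verification''. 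Both are routine to fill in, so I would count your proof as complete in outline and a legitimate, more hands-on alternative to the paper's citation.
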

See [B], Prop. 2.2 (along with Remark \ref{remberger}, and [T], 4.1.9-10 for a proof. 

\begin{remark}\label{remberger}{\rm
One can define another category whose objects are 2-globular diagrams, by $\Hom(D, D^\prime):=\Hom_{2{-}\Cat}(\omega_2(D),\omega_2(D^\prime))$. In this way, we get the category $\Theta_2$, see [B]. It is proven [B] Prop. 2.2 that this category is dual to the category of Joyal 2-disks [J]. One can show that this duality restricted to the dominant morphisms of 2-categories provides an (anti-) equivalence with the subcategory of the category Joyal 2-disks, whose morphisms have the following property: the only pre-image of any boundary point is a boundary point. The latter category is clearly identified with the category of 2-level trees. 
}
\end{remark}

Of course, the duality of Proposition \ref{propduality} is the $n=2$ analogue of the classical Joyal duality between the category $\Delta_+$ (the usual category $\Delta$ augmented with an initial object [-1]) and the category $\Delta_{fi}$ of {\it finite intervals}, see [J].

\subsubsection{\sc Batanin Theorem}
Denote the category of symmetric operads (in a given symmetric monoidal category) by $\Op_\Sigma$.

Batanin [Ba2], Sect. 6 and 8, constructs a pair of functors relating symmetric operads and $n$-operads:

$$
\Symm\colon \Op_n^{n-1}\rightleftarrows   \Op_\Sigma \colon \Des
$$
The right adjoint functor of desymmetrisation $\Des$ associates to each pruned $n$-tree $T$ its set of leaves $|T|$ (which are all at the level $n$):
$$
\Des(\mathcal{O})(T)=\mathcal{O}(|T|)
$$
and for a map $\sigma\colon T\to S$ of $n$-trees, the $n$-operadic composition associated with $\sigma$ is defined as the corresponding composition for $|\sigma|=|\sigma_n|\colon |T|\to |S|$, twisted by the shuffle permutation $\pi(\sigma_n)$ of the map $|\sigma_n|\colon |T|\to |S|$ defined by the condition that the composition of $\pi(\sigma)$ followed by an order preserving map of finite sets is $\sigma_n$ (see [Ba2], Sect. 6). 

The symmetrisation functor is defined as the left adjoint to $\Des$, its existence is established in [Ba2], Sect. 8.

The main theorem on $n$-operads was proven in [Ba3] Th.8.6 for topological spaces and in [Ba3] Th.8.7 for complexes of vector spaces. We provide below the statement for $C^\udot(\k)$, as the one we use here.
Denote by $\underline{\k}$ the constant $n$-operad, $\underline{\k}(T)=\k$, with evident operadic compositions. We say that an $n$-operad in $C^\udot(\k)$ is {\it augmented} by $\underline{\k}$ if there is a map of $n$-operads $p\colon \mathcal{O}\to\underline{\k}$, called the augmentation map. 

\begin{theorem}\label{theoremm}{\rm [Batanin]}
Let $\mathcal{O}$ be reduced pruned $(n-1)$-terminal $n$ operad in the symmetric monoidal category $C^\udot(\k)$. 
Assume $\mathcal{O}$ is augmented to the constant $n$-operad $\underline{\k}$, and that for any arity $T$ the augmentation map $p(T)\colon \mathcal{O}(T)\to \k$ is a quasi-isomorphis of complexes. 
Then there is a morphism of $\Sigma$-operads $C_\udot(E_n;\k)\to\Sym(\mathcal{O})$, thus making any $\mathcal{O}$-algebra a $C_\udot(E_n;\k)$-algebra.
\end{theorem}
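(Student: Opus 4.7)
The natural strategy is to exploit the adjunction $\Sym\colon \Op_n\rightleftarrows \Op_\Sigma\colon\Des$ recalled in the paragraph preceding the statement, together with a Quillen (projective) model structure on $\Op_n$ for which $\Sym\dashv\Des$ is a Quillen adjunction and weak equivalences are arity-wise quasi-isomorphisms. The plan is to realise $C_\udot(E_n;\k)$, up to quasi-isomorphism, as $\Sym$ applied to a cofibrant resolution of the terminal $n$-operad $\underline{\k}$, and then use the hypothesis on $\mathcal{O}$ to produce a map from this cofibrant resolution into $\mathcal{O}$.

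Concretely, I would first choose a cofibrant resolution $q\colon K_n\xrightarrow{\sim}\underline{\k}$ in $\Op_n$. Because every component $p(T)\colon\mathcal{O}(T)\to\k$ of the given augmentation is a quasi-isomorphism, the augmentation $p$ is a trivial fibration in the projective model structure. Cofibrancy of $K_n$ then yields a lift $\tilde p\colon K_n\to\mathcal{O}$ over $p$, and applying the left adjoint $\Sym$ produces a morphism of $\Sigma$-operads
\begin{equation*}
\Sym(\tilde p)\colon \Sym(K_n)\longrightarrow \Sym(\mathcal{O}).
\end{equation*}

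The core step is to identify $\Sym(K_n)$, up to quasi-isomorphism, with $C_\udot(E_n;\k)$. For this I would fix an explicit cofibrant model of $\underline{\k}$ built from a contractible cellular $n$-operad, for instance the dg $n$-operadic analogue of the cellular model used by Batanin in the topological setting (normalised chains on a cofibrant topological $n$-operad weakly equivalent to the terminal one). The combinatorial input is that when $K_n$ is built cell by cell over $\Tree_n$, the left adjoint $\Sym$ turns each pruned-tree cell into an ordinary arity cell twisted by the shuffle permutation $\pi(\sigma_n)$ recalled above; summing over sections of $|\sigma|$ yields the symmetric operad structure. A comparison with a standard $E_n$-model (the lattice path operad $\mathcal{L}_n$ of [BB], or directly the little $n$-cubes chains) via a zigzag of explicit maps of $\Sigma$-operads then gives a quasi-isomorphism $C_\udot(E_n;\k)\xrightarrow{\sim}\Sym(K_n)$, possibly after a further cofibrant replacement of the source to strictify the zigzag.

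Composing, one obtains the morphism of $\Sigma$-operads
\begin{equation*}
C_\udot(E_n;\k)\xrightarrow{\sim}\Sym(K_n)\xrightarrow{\Sym(\tilde p)}\Sym(\mathcal{O})
\end{equation*}
of the statement; since $\Sym\dashv\Des$ induces an equivalence between $\mathcal{O}$-algebras and $\Sym(\mathcal{O})$-algebras, any $\mathcal{O}$-algebra thereby becomes a $C_\udot(E_n;\k)$-algebra. The expected main obstacle is the identification of $\Sym(K_n)$ with $C_\udot(E_n;\k)$: the adjoint-lifting and Quillen-structure arguments are essentially formal, but the combinatorial comparison between the symmetrisation of a cofibrant $n$-operad and the chains on little $n$-cubes is the real content of Batanin's theorem, and is where most of the work in [Ba3] lies.
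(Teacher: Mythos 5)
The paper does not actually prove this theorem: it is quoted from Batanin ([Ba3], Th.\ 8.6 and 8.7), and the only indication of an argument is the remark immediately following the statement, which records that $(\Symm,\Des)$ is a Quillen pair for the model structures of [BB2] and that the symmetrisation of a \emph{cofibrant} contractible pruned reduced $(n-1)$-terminal $n$-operad is weakly equivalent to $C_\udot(E_n;\k)$. Your outline is precisely that strategy --- cofibrant resolution $K_n\to\underline{\k}$, lift against the augmentation, apply the left Quillen functor $\Symm$, and identify $\Symm(K_n)$ with $C_\udot(E_n;\k)$ --- so, to the extent a comparison is possible, you have reconstructed the intended proof. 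Two remarks. First, you should justify that $p$ is a trivial fibration: over the field $\k$ a quasi-isomorphism $p(T)\colon\mathcal{O}(T)\to\k$ onto a complex concentrated in degree $0$ is automatically degreewise surjective (any class hitting $1\in H^0(\k)=\k$ is represented by a cocycle mapping onto $1$), so the lifting argument does go through. Second, the identification $\Symm(K_n)\simeq C_\udot(E_n;\k)$ is, as you correctly flag, where all the work lies; in [Ba3] it is carried out via the Getzler--Jones/Fulton--MacPherson compactifications of real configuration spaces (hence the title of that paper), not via the lattice path operad, so your proposed comparison route through $\mathcal{L}_n$ of [BB1] differs in detail from Batanin's argument, though it is a legitimate alternative given the results of [BB1]. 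Neither route is carried out in the present paper, so your proposal should be read as a plan whose central step is outsourced to the literature rather than as a self-contained proof.
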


\begin{remark}{\rm
There are closed model structures on the categories of $\Sigma$-operads and $n$-operads, constructed in [BB2]. Within these model structures, $(\Symm, \Des)$ is a Quillen pair, with $\Symm$ the left adjoint. The stronger version of this theorem [Ba3] actually says that the symmetrisation of a {\it cofibrant} contractible pruned, reduced, $(n-1)$-terminal is weakly equivalent to the symmetric operad $C^\udot(E_n;\k)$. 
}
\end{remark}

An advantage of the approach of Theorem \ref{theoremm} to $n$-algebras via contractible $n$-operads is that the latter is much simpler and more ``linear'' object than the symmetric operads $E_n$ and $e_n$. At the same time, it links higher category theory and $E_n$-algebras in a very explicit way.

\subsection{\sc Tamarkin's 2-operad}
\subsubsection{\sc Tamarkin's notations on 2-operads}\label{tamnot}
In this paper, we adopt the ``globular'' notations for $n$-operads, used in [T]. Thus, the components of an $n$-operad $\mathcal{O}$ are $\mathcal{O}(D)$ where $D$ is a globular diagram as above. We use the following notations:

We use notations $U,V,...$ for globular diagrams, $[U]$ for $\omega_2(U)$. Tamarkin calls a 2-globular diagram {\it a 2-disk} (this terminology is a bit confusing, because of Joyal $n$-disks). A {\it minimal ball} in $U$ is an elementary 2-morphism of $\omega_2(U)=[U]$, that is, an element of $U_2$ in the presentation \eqref{globpres}. Minimal balls are denoted by $\mu,\nu,\dots$. The set of all minimal balls in a 2-globular diagram $U$ is denoted by $\mathcal{F}(U)$. 

Denote by $\mathcal{C}(U)$ the set of {\it intervals} of $U$, that is, the set $U_1$ in the presentation \eqref{globpres}. (It is similar to the ordered set of intervals in an ordinal; for the ordinal $[k]=\{0<1<2<\dots<k\}$, the cardinality of the set of intervals is $k$, and the intervals are $(01), (12),\dots, (k-1,k)$. We sometimes write $\overrightarrow{i,i+1}$ for the interval $(i,i+1)$.

For a 2-globular set $U$, there is a map 
$$
\pi(U)\colon \mathcal{F}(U)\to\mathcal{C}(U)
$$
defined as $\pi_U(\nu)=(i,i+1)$ if $s_1(\nu)=i, t_1(\nu)=i+1$. 

\subsubsection{}
Tamarkin's 2-operad $\mathbf{seq}$ is a contractible 2-operad in sets whose colors are $[0],[1],[2],...$, and the category of unary operations is $\Delta$. 

Let $U$ be a 2-disk. Recall our notations $\mathcal{F}(U)$ for its minimal balls (which are the generators in the strict 2-category $[U]$), $\mathcal{C}(U)$ for its intervals, and $\pi_U\colon \mathcal{F}(U)\to\mathcal{C}(U)$ for the map defined as $\pi_U(\nu)=\overrightarrow{\ell}$ if $\nu\in U(\ell-1,\ell)$, see Section \ref{tamnot}. 

As a colored 2-operad in sets, $\mathbf{seq}$ is given by a collection of sets $\mathbf{seq}(U)(\{I_\nu\}_{\nu\in\mathcal{F}(U)}; J)$
where $\{I_\nu\}$ are the colors (1-ordinals) associated with all minimal balls of $U$, and $J$ is the output color. 

This 2-operad is defined in [T, Sect. 6.1], we slightly rephrase the definition.

The set $\mathbf{seq}(U)(\{I_\nu\}; J)$ is given by the following data:
\begin{itemize}
\item[A)] a total order on the disjoint union $\mathcal{I}_U:=\sqcup_{\nu\in\mathcal{F}(U)}I_\nu$ which is subject to the conditions (1)-(3) below;  we use notation $I_\tot$ for the 1-ordinal on $\mathcal{I}_U$.
\item[B)] a map $W\colon I_\tot\to J$ of 1-ordinals,
\end{itemize}
where the conditions (1)-(3) read:
\begin{itemize}
\item[(1)] the total order $I_\tot$ on $\mathcal{I}_U$ agrees with the order on each $I_\nu$,
\item[(2)] if $\pi_U(\nu_1)=\pi_U(\nu_2)$, and $\nu_1<\nu_2$, then $I_{\nu_1}<I_{\nu_2}$,
\item[(3)] if $\pi_U(\nu_1)<\pi_U(\nu_2)$, then the following 3 cases are possible:
\begin{itemize}
\item[(3a)] $I_{\nu_1}<I_{\nu_2}$,
\item[(3b)] $I_{\nu_1}>I_{\nu_2}$,
\item[(3c)] (aka ``the brace'') assume $I_{\nu_2}=[n]$, then there is $0\le a<n$ such that $[0,a]<I_{\nu_1}< [a+1,n]$
\end{itemize}
\end{itemize}
This definition indeed gives rise to a $\Delta$-colored 2-operad in sets, as we recall below.

\begin{remark}{\rm
The set $\mathbf{seq}(U)(\{I_\nu\};J)$ behaves (for a fixed $U$) contravariantly in each $I_\nu$ and covariantly in $J$. The latter is clear; to explain the former, assume we have maps $\phi_\nu\colon I_\nu^\prime\to I_\nu$ in $\Delta$. Then the total order $I_\tot$ on $\sqcup_\nu I_\nu$ can be pulled back to a total order $I_\tot^\prime$ on $\sqcup_\nu I_\nu^\prime$, and this pulled-back total order on $\sqcup I_\nu^\prime$ satisfies (1)-(3) above if the original one does. 
Moreover, one gets a map of the total order 1-ordinals $\phi_{\nu *}\colon I_\tot^\prime\to I_\tot$, so that one defines a map $W^\prime\colon I_\tot^{\prime}\to J$ as $W\colon=W\circ \phi_{\nu *}$.  In this way one gets a functor $\mathbf{seq}(U)\colon (\Delta^\opp)^{|\mathcal{F}(U)|}\times \Delta\to\Sets$.
}
\end{remark}
The 2-operadic composition in $\mathbf{seq}$ 
\begin{equation}\label{compseq}
\bigotimes_{\mu\in\mathcal{F(V)}}\mathbf{seq}(P^{-1}(\mu)(\{I_\nu\}_{\nu\in \mathcal{F}(P^{-1}(\mu))}; J_\mu)\otimes \mathbf{seq}(V)(\{J_\mu\}_{\mu\in\mathcal{F}(V)}; K)\to\mathbf{seq}(\{I_\nu\}_{\nu\in \mathcal{F}(U)};K)
\end{equation}
is defined as follows. Take a total order $J_\tot$ of $\sqcup_{\mu\in \mathcal{F}(V)} J_\mu$ and a map of ordinals $W\colon J_\tot\to K$, and, for each $\nu\in \mathcal{F}(P^{-1}(\mu))$, a total order $I_{\mu,\tot}$ on $\sqcup_{\nu\in P^{-1}(\mu)} I_\nu$ and a map $W_\mu\colon I_{\mu,\tot}\to J_\mu$, we use the maps $\{W_\mu\}$ to pull-back the total order $J_\tot$ to a total order $I_\tot$ on $\sqcup_{\mu\in \mathcal{F}(V)} I_{\mu,\tot}=\sqcup_{\nu\in \mathcal{F}(U)}I_\nu$. Finally, we have the composition $W_\tot\colon I_\tot\to J_\tot\to K$, and define the composition \eqref{compseq} as $(I_\tot,W_\tot)$.
(One checks directly that the total order $I_\tot$ satisfies conditions (1)-(3) above).

\section{\sc Whiskering map for $\mathbf{seq}$}\label{tamopapp2}
The material of this Subsection was inspired by [BBM]. We construct a dg 2-operad $\mathbf{Breq}$ and a {\it whiskering map}
$$
w\colon \mathbf{Breq}\to|\mathbf{seq}|
$$
which are to the 2-operad $\mathbf{seq}$ as the operad $\mathcal{B}r_c$ and the whiskering map $w\colon\mathcal{B}r_c\to |\mathcal{L}_{c}|$
to the $\Delta$-colored lattice path operad $\mathcal{L}_c$ in sets, see [BBM, Sect. 3.1], [BB] ($c\ge 0$ is {\it the level}). Here $|\mathbf{seq}|$ denotes the condensation of the Tamarkin 2-operad $\mathbf{seq}$ in $C^\udot(\k)$ (which is a 2-operad in $C^\udot(\k)$). 

For our purposes, the condensation of $\mathbf{seq}$ in $C^\udot(\k)$ is defined, via its components, as follows:
$$
|\mathbf{seq}|(U)=\overline{\Tot}_J\big({\underline{\Tot}}_{\{I_\nu\}} \mathbf{seq}(\{I_\nu\}; J)\big)
$$
where $\underline{\Tot}$ denotes the realisation in $C^\udot(\k)$ by the simplicial variables, and $\overline{\Tot}$ denotes the normalisation in $C^\udot(\k)$ by the cosimplicial variable. Here the realisation and the totalisation are {\it non-normalised}. 

Define, for a 2-disk $U$, the degree $n$ component of the complex $\mathbf{Breq}(U)\in C^\udot(\k)$ as
\begin{equation}
\mathbf{Breq}(U)^n=\Big[\underline{\Tot}_{\{I_\nu\}}\Big(\mathbf{seq}(U)(\{I_\nu\}_{\nu\in\mathcal{F}(U)}; [0])\Big)\Big]^n
\end{equation}
These dg vector spaces lack the direct composition operation; it is fixed by the {\it whiskering map} $w$.

Let $\alpha\in \mathbf{Breq}(U)$ be an element coming from an element in $\mathbf{seq}(U)(\{I_\nu\}_{\nu\in\mathcal{F}(U)}; [0])$. By abuse of notations, we denote this element also by $\alpha$. For a 1-ordinal $J$, define $w_J(\alpha)\in \underline{\Tot}_{\{I_\nu^\prime\}}\mathbf{seq}(U)(\{I_\nu^\prime\}_{\nu\in\mathcal{F}(U)}; J)$ as follows.

Define a set $S_{\alpha, J}$ by the following data:
an element $s\in S_{\alpha, J}$ is a collection of 1-ordinals $\{I_\nu^\prime\}_{\nu\in \mathcal{F}(U)}$ and {\it surjections} of 1-ordinals $p_\nu\colon I_\nu^\prime \to I_\nu$
such that \footnote{We adopt the notation $|[n]|=n+1$, and $|J|$ is the number of elements in the ordinal $J$. More generally, for a finite set $X$ we denote by $|X|$ the number of elements in $X$.}  $$\Upsilon=\sum_{\nu,\  i\in I_\nu}(|p_\nu^{-1}(i)|-1)=|J|-1$$That is, we glue in several {\it intervals} in each $I_\nu$ such that the total number of inserted intervals, over all $\nu\in \mathcal{F}(U)$, is equal to $|J|-1$, the number of elementary intervals in the ordinal $J$. clearly $p^{-1}_\nu(i)$ is connected subordinal in $I_\nu^\prime$ which contains $|p_\nu^{-1}(i)|-1$ elementary intervals (thus, only $i\in I_\nu$ with $|p_\nu^{-1}(i)|\ge 2$ correspond to (non-empty) intervals. 

\comment
such that $\sum_{\nu\in\mathcal{F}(U)}(|I_\nu^\prime|-|I_\nu|)=|J|-1$, with a choice of an element in each $p_\nu^{-1}(i)$, $\nu\in \mathcal{F}(U), i\in I_\nu$. These chosen elements are called {\it distinguished}, the other elements are called {\it new} or {\it black}. 
\endcomment

Note that the total order $\alpha$ on $\sqcup_\nu I_\nu$ automatically pulls back to a total order $(\{p_\nu\})^*\alpha$ on the disjoint union $\sqcup_\nu I_\nu^\prime$. This total order satisfies conditions (1)-(3) of Section \ref{tamopapp1}. (Note that $(\{p_\nu\})^*\alpha$ comes with the canonical map $t\colon (\{p_\nu\})^*\alpha\to \alpha$ of ordinals). 

For each element $s\in S$, define a map of 1-ordinals 
$$
W^\prime\colon (\{p_\nu\})^*\alpha\to J
$$
as follows.

Call an element $i\in I_\nu$ essential if $\sharp p_\nu^{-1}(i)\ge 2$, clearly $p_\nu^{-1}(i)$ is a connected interval. 
Let $A_\nu$ be a generally non-connected sub-ordinal in $I_\nu^\prime$ equal to the union of pre-images $p_\nu^{-1}(i)$ over all essential $i\in I_\nu$. The images of $p^{-1}_\nu(i)$ for essential $i$ clearly remain connected subordinals in the total order $(\{p_\nu\})^*\alpha$. By an elementary interval of an ordinal $[N]$ we mean any interval $[i,i+1]$. We say that an elementary interval in $(\{p_\nu\})^*\alpha$ is essential if it belongs to the image of some $A_\nu$, and inessential otherwise.

Define $W^\prime(0)=0$, and 
\begin{equation}
\begin{cases}
W^\prime(i+1)=W^\prime(i)&\text{  if  }[i,i+1]\subset (\{p_\nu\})^*\alpha\text{  is inessential}\\
W^\prime(i+1)=W^\prime(i)+1&\text{ if  }[i,i+1]\subset (\{p_\nu\})^*\alpha\text{  is essential}
\end{cases}
\end{equation}
That is, each inessential elementary interval is mapped to a point in $J$, and each essential elementary interval is mapped isomorphically to an elementary interval in $J$. The total number of essential intervals is equal to $|J|-1$, as well as the number of newly added points (which is, by definition, $\sum_\nu\sum_{i\in I_\nu}(\sharp p_\nu^{-1}(i)-1)$). Clearly the map $W^\prime$ is a surjection. 

\comment
$$
W^\prime(a)=\ell+1
$$
if there are $\ell$ ``new'' points less than $a$. 
\endcomment

Thus, to each $\alpha\in\mathbf{Breq}(U)$ and $s\in S_{\alpha, J}$ is associated an element in $\mathbf{seq}(U)(\{I_\nu^\prime\}; J)$. This element is denoted by $w_{J,s}(\alpha)$.

\comment

\begin{itemize}
\item[(1)] 1-ordinals $\{I_\nu^\prime\}_{\nu\in \mathcal{F}(U)}$ and surjections of 1-ordinals $p_\nu\colon I_\nu^\prime \to I_\nu$
such that $\sum_{\nu\in\mathcal{F}(U)}(|I_\nu^\prime|-|I_\nu|)=|J|$,
\item[(2)] a total order on the disjoint union $\sqcup_\nu I_\nu^\prime$ satisfying (1)-(3) of Section \ref{tamopapp1}, and a surjection of 1-ordinals $P\colon  \sqcup_\nu I_\nu^\prime \to \sqcup_\nu I_\nu$ (where the total order on the r.h.s. is given by the element $\alpha$), and a map of 1-ordinals $W\colon \sqcup_\nu I_\nu^\prime\to J$,
\item[(3)] the diagram 
$$
\xymatrix{
I_\nu\ar[r]\ar[r]&\sqcup_\nu I_\nu\ar[r]&[0]\\
I_\nu^\prime\ar[r]\ar[u]^{p_\nu}&\sqcup_\nu I_\nu^\prime\ar[r]^{W}\ar[u]_{P}&J\ar[u]
}
$$
commutes, and the subdiagram
$$
\xymatrix{
\sqcup_\nu I_\nu\ar[r]&[0]\\
\sqcup_\nu I_\nu^\prime\ar[u]\ar[r]&J\ar[u]
}
$$
is a push-out. 
\end{itemize}
For $s\in S$, denote by $w_{J,s}\in\mathbf{seq}(U)(\{I_\nu^\prime\}_{\nu\in \mathcal{F}(U)}; J)$ the corresponding element.

\endcomment

Define
\begin{equation}\label{whsigns}
w_J(\alpha)=\sum_{s\in S_{\alpha,J}}(-1)^{N_s} w_{J,s}(\alpha)
\end{equation}
where the signs $N_s$ are defined just below, and define a map 
\begin{equation}\label{eqw}
\begin{gathered}
w\colon \mathbf{Breq}\to |\mathbf{seq}|\\
\alpha\to \prod_{J\in \Delta}w_J(\alpha)
\end{gathered}
\end{equation}
The map $w$ is called the {\it whiskering map}.  Note that, $w$ is a degree 0 map  of the underlying graded vector spaces.

Define the sign in front of $w_{J,s}(\alpha)$. Let $s=\{p_a\colon I_a^\prime\to I_a\}_{a\in |T|}$. For $a<_0b$ denote by $r_{a,b}$ the maximal element of $I_b^\prime$ such that $[0,r_{a,b}]<I_a^\prime$, in the sense of the total order in $(\{p_a\})^*\alpha$. Denote
\begin{equation}\label{signss1}
n_{a,b}=\sum_{i\in I_a} r_{a,b}\cdot (|p_a^{-1}(i)|-1)+\sum_{j\in [0, p_b(r_{a,b})]}(|p_b^{-1}(j)|-1)\cdot (|I_a|-1)
\end{equation}
and, further, 
\begin{equation}\label{signss2}
N_s=\sum_{a<_0 b}n_{a,b}
\end{equation}
We define the sign in \eqref{whsigns} in from of $w_{J,s}$ as $(-1)^{N_s}$.

\begin{remark}{\rm
The map $w$ is a {\it 2-operadic} counterpart of the corresponding whiskering map of [BBM, (4)], where the case of the {\it symmetric} lattice path operad is considered. On the other hand, the Tamarkin 2-operad uses, in a sense, the Joyal dual description to the lattice path operad. This is why adding of points in loc.cit. becomes adding of elementary intervals here. 
}
\end{remark}

\begin{prop}
With the signs defined as above the map $w\colon \mathbf{Breq}(T)\to |\mathbf{seq}|(T)$ is a map of complexes, for any arity 2-ordinal $T$. 
\end{prop}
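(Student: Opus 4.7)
The plan is to verify the chain map condition $d_{|\mathbf{seq}|}\circ w = w\circ d_{\mathbf{Breq}}$ componentwise in the output ordinal $J$. The differential on $\mathbf{Breq}(U)$ is the total multi-simplicial face differential on the input ordinals $\{I_\nu\}_{\nu\in\mathcal{F}(U)}$; the one on $|\mathbf{seq}|(U)$ splits as $d_{\mathrm{int}}+d_{\mathrm{ext}}$, where $d_{\mathrm{int}}$ is the multi-simplicial face differential on $\{I_\nu^\prime\}$ and $d_{\mathrm{ext}}=\sum_i(-1)^i\delta^i$ is the cosimplicial differential on $J$. Since $w$ is a degree-$0$ map, it suffices, for each fixed $J$, to prove
\begin{equation*}
w_J(d_{\mathbf{Breq}}\alpha)\;=\;d_{\mathrm{int}}w_J(\alpha)\;+\;\sum_i(-1)^i\,\delta^i\,w_{J^-}(\alpha),
\end{equation*}
where $|J^-|=|J|-1$.

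Expand $d_{\mathrm{int}}w_J(\alpha)=\sum_s(-1)^{N_s}\sum_{\nu,j\in I_\nu^\prime}(-1)^{\epsilon(\nu,j)}d_j^{(\nu)}w_{J,s}(\alpha)$. The central observation is that the restriction of $W^\prime_s$ to the new total order (obtained by deleting $j\in I_\nu^\prime$) remains surjective onto $J$ precisely when $j$ is \emph{not} the unique $W^\prime_s$-preimage of $W^\prime_s(j)\in J$; equivalently, at least one of the elementary intervals of $(\{p_\nu\})^\ast\alpha$ adjacent to $j$ is inessential. This yields two cases.

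In Case (A), the restriction stays surjective onto $J$, and a direct inspection of the pullback total order and of $W^\prime_s$ shows $d_j^{(\nu)}w_{J,s}(\alpha)=w_{J,s^{\mathrm{new}}}(d_i^{(\nu)}\alpha)$ for an appropriate $i\in I_\nu$, namely $i=p_\nu(j)$ when $j$ is a singleton fibre, and the adjacent vertex when $j$ sits at the boundary of a fat fibre of size two. The assignment $(s,\nu,j)\mapsto(s^{\mathrm{new}},\nu,i)$ is a bijection onto the multi-simplicial terms in the expansion of $w_J(d_{\mathbf{Breq}}\alpha)$. In Case (B), deleting $j$ collapses exactly one essential elementary interval, so the restricted $W^\prime_s$ misses a single vertex $W^\prime_s(j)\in J$; the new configuration $s^{\mathrm{new}}$ lies in $S_{\alpha,J^-}$ with $\Upsilon$ decreasing by $1$, and a direct check gives $d_j^{(\nu)}w_{J,s}(\alpha)=\delta^{W^\prime_s(j)}\,w_{J^-,s^{\mathrm{new}}}(\alpha)$. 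The assignment $(s,\nu,j)\mapsto(s^{\mathrm{new}},W^\prime_s(j))$ is a bijection onto the cosimplicial terms $\sum_i(-1)^i\delta^i w_{J^-}(\alpha)$.

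The two bijections exhaust all terms of $d_{\mathrm{int}}w_J(\alpha)$, so the proposition reduces to sign matching, which is the main technical obstacle and the motivation behind \eqref{signss1}--\eqref{signss2}. In Case (A) the difference $N_s-N_{s^{\mathrm{new}}}$ must reproduce the Koszul sign incurred by moving the deleted vertex past all preceding new points in the pairs $(a,b)$ with $a<_0 b$; the first summand of $n_{a,b}$ handles the shuffle inside $I_a^\prime$, the second handles the one inside $I_b^\prime$. In Case (B), the sign $(-1)^{W^\prime_s(j)}$ of $\delta^{W^\prime_s(j)}$ has to arise from $N_s-N_{s^{\mathrm{new}}}$ combined with the simplicial sign $(-1)^{\epsilon(\nu,j)}$; the key identity is that each pair $(a,b)$ contributes an increment to $N_s-N_{s^{\mathrm{new}}}$ equal to the number of essential intervals of $A_a$ preceding the deleted one (first summand) plus the number of essential intervals of $A_b$ preceding $p_b(r_{a,b})$ in the pullback order (second summand), and that these increments, summed over all $a<_0 b$, telescope to exactly the ordinal position $W^\prime_s(j)\in J$. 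Executing this Koszul bookkeeping in both cases completes the verification.
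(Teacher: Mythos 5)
The paper itself gives no argument here (the proof reads ``It is a direct check''), so your attempt to actually organize that check is welcome, and your setup is right: the splitting $d_{|\mathbf{seq}|}=d_{\mathrm{int}}+d_{\mathrm{ext}}$, the componentwise identity in $J$, and the dichotomy ``the restriction of $W'_s$ stays surjective iff $j$ is not the unique preimage of $W'_s(j)$'' are all correct. Your Case (B) (deletion of an element that is the unique preimage of its $W'_s$-value, i.e.\ an interior point of a fat fibre or a global extreme lying in a fat fibre) does biject with the cosimplicial terms $\delta^i w_{J^-}(\alpha)$ as you describe.

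The gap is in Case (A). Your case includes not only singleton fibres but also deletions of a non-extremal \emph{boundary} element of a fat fibre $p_\nu^{-1}(i)$ of size $m\ge 2$ (one adjacent interval essential, one inessential), and these do \emph{not} produce terms of $w_J(d_{\mathbf{Breq}}\alpha)$: after such a deletion $p_\nu$ is still surjective onto all of $I_\nu$ (so the underlying coloured total order is still a pullback of $\alpha$, not of a face of $\alpha$), and the surviving map $W''$ acquires its jump across an interval joining two \emph{different} fibres, hence is not the canonical whiskering map of any configuration. Your prescription ``$i=$ the adjacent vertex when $j$ sits at the boundary of a fat fibre of size two'' is not correct in general and says nothing about fibres of size $\ge 3$. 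The actual mechanism is a third family of terms cancelling in pairs: the element with a spurious jump across the inessential interval $[e,f]$ arises from exactly two pairs $(s,j)$ --- append the extra point to the fibre of $e$ or prepend it to the fibre of $f$ --- and these two contributions carry opposite signs. (The two fibres involved may even have different colours; e.g.\ for $U=(1,1)$, $I_\nu=I_\lambda=[0]$, $J=[1]$, the deletion of $\nu_1$ from the configuration with $I'_\nu=[1]$ cancels the deletion of $\lambda_0$ from the configuration with $I'_\lambda=[1]$, while $w_J(d\alpha)=0$.) Without isolating this cancelling family, your two bijections do not exhaust the terms of $d_{\mathrm{int}}w_J(\alpha)$ and the identity does not close; with it, your outline becomes a correct skeleton for the omitted direct check, modulo the sign bookkeeping via \eqref{signss1}--\eqref{signss2}, which must then also cover the cancelling pairs.
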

\begin{proof}
It is a direct check. 
\end{proof}

The map $w$ allows to define a 2-operad structure on the 2-sequence $\{\mathbf{Breq}(U)\}_U$, inherited from the dg 2-operad structure 
on $|\mathbf{seq}|$. Explicitly, it is defined as follows. Let $P\colon U\to V$ be a map of 2-disks. For a minimal ball $\mu\in \mathcal{V}$, denote by $P^{-1}(\mu)$ its pre-image, which is 2-subdisk of $U$. We have to define
$$
M_P\colon \bigotimes_{\mu\in \mathcal{F}(V)}\mathbf{Breq}(P^{-1}(\mu))(\{I_\nu\}_{\nu\in\mathcal{F}(P^{-1}(\mu))})\otimes \mathbf{Breq}(\{J_\mu\}_{\mu\in\mathcal{F}(V)})\to\mathbf{Breq}(U)(\{I_\nu\}_{\nu\in \mathcal{F}(U)})
$$
Let $\alpha_\mu\in \mathbf{seq}(P^{-1}(\mu))(\{I_\nu\}_{\nu\in\mathcal{F}(P^{-1}(\mu))};[0])$, $\beta\in\mathbf{seq}(V)(\{J_\mu\}_{\mu\in \mathcal{F}(V)};[0])$. Define
\begin{equation}\label{opbrec}
M_P((\otimes_{\mu\in\mathcal{F}(V)}\alpha_\mu)\otimes\beta)=m_P(\otimes_{\mu\in\mathcal{F}(V)}(w_{J_\mu}\alpha_\mu)\otimes \beta)
\end{equation}
in $|\mathbf{seq}|$, and extend it by linearity. (Here $m_P$ is the operadic composition in $|\mathbf{seq}|$ associated with $P\colon U\to V$). Essentially, the in the rule \eqref{opbrec} we replace each $\alpha_\mu$ by its whiskering image $w_{J_\mu}\alpha_\mu$, after which the operadic composition in $|\mathbf{seq}|$ is defined. 

\begin{prop}\label{propv21}
The map $w\colon \mathbf{Breq}\to|\mathbf{seq}|$, see \eqref{eqw}, is a map of dg operads.
\end{prop}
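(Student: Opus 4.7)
The preservation of the differential is the content of the preceding Proposition, so it remains to verify compatibility with the operadic composition and with the unit. For the unit, one checks directly: the unit of $\mathbf{Breq}$ is the identity $\id_{[0]}\in\mathbf{seq}(U_2)([0];[0])\subset\mathbf{Breq}(U_2)$, and for every $J\in\Delta$ the set $S_{\id_{[0]},J}$ has a single element (the unique surjection $J\twoheadrightarrow [0]$, whose only fiber is all of $J$ with every elementary interval essential), producing $w_J(\id_{[0]})=\id_J\in\mathbf{seq}(U_2)(J;J)$. The family $(\id_J)_{J\in\Delta}$ is exactly the operadic unit of $|\mathbf{seq}|$, so $w$ preserves units.

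For compatibility with the composition, fix $P\colon U\to V$, $\alpha_\mu\in\mathbf{Breq}(P^{-1}(\mu))$ for $\mu\in\mathcal{F}(V)$, and $\beta\in\mathbf{seq}(V)(\{J_\mu\};[0])$. The defining formula \eqref{opbrec} gives
$$
w\bigl(M_P(\otimes_\mu\alpha_\mu\otimes\beta)\bigr)=\prod_{K}w_K\Bigl(m_P\bigl(\otimes_\mu w_{J_\mu}(\alpha_\mu)\otimes\beta\bigr)\Bigr),
$$
while the color-matching for composition in $|\mathbf{seq}|$ yields
$$
m_P\bigl(\otimes_\mu w(\alpha_\mu)\otimes w(\beta)\bigr)=\prod_{K}\sum_{s=(\{J'_\mu\},\{q_\mu\})\in S_{\beta,K}}m_P\bigl(\otimes_\mu w_{J'_\mu}(\alpha_\mu)\otimes w_{K,s}(\beta)\bigr),
$$
since only components whose output colors on the $\alpha$-side agree with the input colors on the $\beta$-side contribute. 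At a fixed output arity $K$, it therefore suffices to exhibit a bijection between the indexing set for the left-hand side (whiskerings inserting $|K|-1$ elementary intervals into the composite input ordinals $\{I_\nu'\}$) and the set of pairs $\bigl(s,(t_\mu)_\mu\bigr)$ with $s\in S_{\beta,K}$ and $t_\mu\in S_{\alpha_\mu,J'_\mu}$ on the right-hand side, together with a matching of the associated signs \eqref{signss1}--\eqref{signss2}.

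The bijection is natural: each newly inserted elementary interval in the composite either sits strictly inside a single block $p_\nu^{-1}(i)$ of the whiskering associated to some $\alpha_\mu$ (in which case it is recorded by $t_\mu$), or else it straddles two such blocks; in the latter case, pushing forward through the ordinal map defining $\alpha_\mu$ identifies it with a new interval inside some $q_\mu^{-1}(j)$, and is therefore recorded by $s$. The total-order conditions (1)--(3) of Section~\ref{tamopapp1} on the composite are equivalent to the same conditions imposed separately on $s$ and on each $t_\mu$, so membership in the respective $S$-sets is preserved under the bijection. The main (and only substantive) obstacle is the sign-matching: one must verify that for every pair $a<_0 b$ of minimal balls in $V$, the summand $n_{a,b}$ of \eqref{signss1} decomposes additively into a contribution to $N_s$ (coming from new intervals descending to $\beta$) and contributions to the various $N_{t_\mu}$ (coming from new intervals staying strictly within one $\alpha_\mu$), so that $N_{\text{composite}}\equiv N_s+\sum_\mu N_{t_\mu}\pmod 2$. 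This is a direct, if somewhat tedious, bookkeeping exercise using the multiplicativity of \eqref{signss1} in the relevant fiber cardinalities and the fact that the pull-back of total orders through the $q_\mu$'s and through the ordinal maps defining $\alpha_\mu$ commute in the composite. Once this sign identity is in hand, summing the bijection over all $K$ gives the required $w\circ M_P=m_P\circ(w^{\otimes}\otimes w)$, completing the proof.
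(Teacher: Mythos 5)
Your argument follows essentially the same route as the paper's proof: reduce to the interchange identity between whiskering and operadic composition at each output arity, exhibit a bijection between the two indexing sets of iterated whiskering data, and defer the sign verification to a direct check with \eqref{signss1}--\eqref{signss2}; the unit check you add is a harmless supplement the paper omits. The only caveat is that a newly inserted elementary interval of the outer whiskering collapses to a single point of some $I'_{\mu\nu}$ and is then recorded as a subdivision of $J_\mu$ via the map $W'$, rather than ``straddling two blocks,'' but this imprecision does not change the structure of the bijection.
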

\begin{proof}
Let $P\colon U\to V$ be as above. 
One has to prove that
$$
w(M_P((\otimes_{\mu\in \mathcal{F}(V)}\alpha_\mu)\otimes \beta))=m_P((\otimes_{\mu\in \mathcal{F}(V)}w\alpha_\mu)\otimes w\beta)
$$
or, in the notations as above,
\begin{equation}
w(m_P(\otimes_{\mu\in\mathcal{F}(V)}(w_{J_\mu}\alpha_\mu)\otimes \beta))=m_P(\otimes_{\mu\in\mathcal{F}(V)}(w\alpha_\mu)\otimes w\beta)
\end{equation}
Note that $m_P(\otimes_{\mu\in\mathcal{F}(V)}(w_{J_\mu}\alpha_\mu)\otimes \beta)=m_P(\otimes_{\mu\in\mathcal{F}(V)}(w\alpha_\mu)\otimes \beta)$. The statement would follow from the identity
\begin{equation}\label{hidden}
m_P(\otimes_{\mu\in\mathcal{F}(V)}(w\alpha_\mu)\otimes w_L\beta)=w_L(m_P(\otimes_{\mu\in\mathcal{F}(V)}(w\alpha_\mu)\otimes \beta))
\end{equation}
for any ordinal $L$.

We firstly prove \eqref{hidden} up to signs, and then compare the signs. 

The rhs of \eqref{hidden} is a sum over the set $S_1$ of surjective maps of ordinals
$$
I^\pprime_{\mu\nu}\xrightarrow{p^\prime_{\mu\nu}}I_{\mu\nu}^\prime\xrightarrow{p_{\mu\nu}}I_{\mu\nu}
$$
such that for a given $\mu\in\mathcal{F}(V)$, the preimages of all maps $p_{\mu\nu}\colon I_{\mu\nu}^\prime\to I_{\mu\nu}$ produce $|J_\mu|-1$ new elementary intervals (that is, they are in 1-to-1 correspondence with the elementary intervals of the ordinal $J_\mu$), and the preimages of $p^\prime_{\mu\nu}$, for all $\mu$ and $\nu$, produce $|L|-1$ new elementary intervals. 

The lhs of \eqref{hidden} is a sum over the set $S_2$ of surjective maps of ordinals
$$
J_\mu^\prime\xrightarrow{q_\mu}J_\mu\text{\ \ and  }p_{\mu\nu}^\pprime\colon I_{\mu\nu}^\pprime\to I_{\mu\nu}
$$
such that the preimages of $q_\mu$ give $|L|-1$ new elementary intervals (over all $\mu$), and for a given $\mu$ the preimages of the maps $p^\pprime_{\mu\nu}$ produce $|J_\mu^\prime|-1$ new elementary intervals.

We provide maps $S_1\to S_2$ and $S_2\to S_1$ which are inverse to each other. 

To get an element of $S_2$ out of an element of $S_1$, we have to firstly define a surjective map of ordinals $J_\mu^\prime\to J_\mu$.
To get $J_{\mu\nu}^\prime$ we consider the preimages with respect to the map $p^\prime_{\mu\nu}$ of all $|J_\mu|-1$ elementary intervals (``new'' with respect to $p_{\mu\nu}$)  in $I_{\mu\nu}^\prime$ {\it and} all other preimages of elements in $I_{\mu\nu}^\prime$ which contain more that 1 preimages. Here we mean that $|L|-1$ new intervals obtained as preimages over all $p^\prime_{\mu\nu}$ are ``distributed'' by $\mu$ (when for given $\mu$ all possible $\nu$ are considered). The totality of such new intervals in $I^\pprime_{\mu\nu}$ constitutes an ordinal $J_\mu^\prime$, the surjective map $p_{\mu\nu}$ is obtained when the new intervals from $L$ are contracted.  
The map $p_{\mu\nu}^\pprime$ in $S_2$ is obtained as the composition $p_{\mu\nu}^\pprime=p_{\mu\nu}\circ  p_{\mu\nu}^\prime$.

To get an element of $S_1$ out of an element of $S_2$ we have to decompose $p^\pprime_{\mu\nu}$ as $p_{\mu\nu}\circ p_{\mu\nu}^\prime$. The new intervals in the preimage of $p_{\mu\nu}^\pprime$ are some of the intervals in $J_\mu^\prime$ (call them $J_{\mu\nu}^{prime}$). One has a canonical embedding $J^\prime_{\mu\nu}\subset J^\prime_\mu$, and define $J_{\mu\nu}$ as the contraction of those elementary intervals of $J_{\mu\nu}^\prime$ whose image under (the restriction of) $q_\mu$ is a point. Define $I_{\mu\nu}^\prime$ as the result of contraction of $J_{\mu\nu}^{\prime}$ in $I^\pprime_{\mu\nu}$. By construction, it gives a factorisation of $p_{\mu\nu}^\pprime$ as this projection $I_{\mu\nu}^\pprime\to I_{\mu\nu}^\prime$ followed by a surjection $I_{\mu\nu}^\prime\to I_{\mu\nu}$. 

One checks directly that these two assignments are inverse to each other.

It remains to check the signs, which is straightforward, by use of \eqref{signss1}, \eqref{signss2}.

\end{proof}

Note that the signs in \eqref{whsigns} are uniquely determined from the requirement that $w\colon \mathbf{Breq}\to |\mathbf{seq}|$ is a map of dg operads. 

\begin{prop}\label{whiskprop}
For any 2-disk $U$, the map of complexes $w\colon \mathbf{Breq}(U)\to |\mathbf{seq}|(U)$ is a quasi-isomorphism of complexes. Consequently, the map $w\colon \mathbf{Breq}\to |\mathbf{seq}|$ is a quasi-isomorphism of dg operads. 
\end{prop}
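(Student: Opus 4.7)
The plan is to prove that $w(U)\colon \mathbf{Breq}(U)\to |\mathbf{seq}|(U)$ is a quasi-isomorphism of complexes for each 2-disk $U$; the dg 2-operad statement is then immediate from Proposition \ref{propv21}. Writing $|\mathbf{seq}|(U) = \prod_{p\ge 0} C^p$ with $C^p := \underline{\Tot}_{\{I_\nu\}}\mathbf{seq}(\{I_\nu\}; [p])$, so that $C^0 = \mathbf{Breq}(U)$, I first observe that the projection $\pi\colon |\mathbf{seq}|(U)\to \mathbf{Breq}(U)$, $(x_p)_p\mapsto x_0$, is a chain map: the $p=0$ component of the total differential reduces to the simplicial differential $d_v$ in the $\{I_\nu\}$-variables, as no cosimplicial differential flows into cosimplicial degree $0$. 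Moreover, $\pi\circ w = \id_{\mathbf{Breq}(U)}$ by construction, since $w_{[0]}(\alpha)=\alpha$ is the unique ``no-insertion'' term in the whiskering sum. Hence it suffices to show that $\pi$ is a quasi-iso, as $w$ is then automatically a quasi-iso being a chain-level section.

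The strategy is to filter $|\mathbf{seq}|(U)$ decreasingly by cosimplicial degree $p$, yielding a spectral sequence with $E_1^{p,\bullet} = H^\bullet(C^p)$ and $d_1$ the induced cosimplicial differential. The key technical step is a \emph{Key Lemma}: for each $p\ge 0$, the forgetful map $\rho_p\colon \mathbf{seq}(\{I_\nu\};[p])\to \mathbf{seq}(\{I_\nu\};[0])$, $(I_\tot,W)\mapsto (I_\tot,!)$, induces a quasi-iso $C^p\xrightarrow{\sim}\mathbf{Breq}(U)$ that is natural in $p$. The underlying idea is that the fiber of $\rho_p$ over $(I_\tot,!)$ is $\Hom_\Delta(I_\tot,[p])$, i.e.\ the set of simplices of the standard $p$-simplex $\Delta[p]$, which is contractible to its $0$-vertex. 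Lifting this contraction through the multisimplicial structure in $\{I_\nu\}$ produces a chain homotopy between $\id$ and $\iota_p\circ\rho_p$, where $\iota_p\colon (I_\tot,!)\mapsto (I_\tot, W\equiv 0)$ is the canonical section. Naturality in $p$ then ensures that the cosimplicial structure on $\{H^\bullet(C^p)\}_p$ is the constant one.

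Given the Key Lemma, a direct computation using the identity $\rho_{p+1}\circ d^i = \rho_p$ (valid for all cofaces $d^i$) shows that $d_1$, transported to $H^\bullet(\mathbf{Breq}(U))$ via $\rho_{p,*}$, equals $\sum_{i=0}^{p+1}(-1)^i\cdot\id$, which is $\id$ for $p$ odd and $0$ for $p$ even. Hence $E_2^{p,\bullet}$ is $H^\bullet(\mathbf{Breq}(U))$ at $p=0$ and zero for $p>0$; the spectral sequence collapses at $E_2$, proving that $\pi$ and consequently $w$ are quasi-isomorphisms.

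The main obstacle is the Key Lemma. While the fiberwise picture is transparent (each fiber is a contractible simplex), the naturality of the contracting homotopy with respect to the multisimplicial face maps in $\{I_\nu\}$ (which change $I_\tot$ and simultaneously the fiber $\Hom_\Delta(I_\tot,[p])$) requires an explicit combinatorial construction. Such a construction can be obtained from the Eilenberg--Zilber shuffle formula applied to $\Delta[p]\times\Delta^1$, with signs chosen to match those of \eqref{whsigns}.
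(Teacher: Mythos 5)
Your top-level architecture is the same as the paper's: you use that the projection onto cosimplicial degree $0$ is a chain map splitting $w$, filter the product total complex by cosimplicial degree, identify $E_1$ with the cochain complex of a constant cosimplicial vector space, and collapse at $E_2$. Your computation of $d_1$ via the relation $\rho_{p+1}\circ d^i=\rho_p$ is correct and is a clean way to see that the cosimplicial structure on $E_1$ is the constant one. However, there is a genuine gap: your Key Lemma is precisely the paper's Proposition \ref{lemmaregf}, it carries essentially all of the technical weight of the statement, and you do not prove it. Moreover, the route you sketch for it is problematic. The multisimplicial set $\mathbf{seq}(U)(\{I_\nu\};[p])$ is \emph{not} a product of $\mathbf{seq}(U)(\{I_\nu\};[0])$ with a multisimplicial model of the standard $p$-simplex: the fiber of $\rho_p$ over a configuration $\alpha$ is $\Hom_\Delta(I_\tot(\alpha),[p])$, and a face map in the $\nu$-direction deletes an element of $I_\nu$ whose position inside $I_\tot$ depends on $\alpha$, so the induced operation on the fiber is a face of $\Delta[p]$ whose index varies with $\alpha$. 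This is a twisted product, and the Eilenberg--Zilber shuffle homotopy for an honest product $\Delta[p]\times\Delta^1$ does not transport to it without further argument; you acknowledge this as ``the main obstacle'' but leave it unresolved. The paper proves the corresponding statement by a different mechanism: a filtration of the normalised complex by the total multiplicity degree $M(\omega)$, which reduces the $E_0$-computation to the degree-$0$ case handled in Theorem \ref{theoremb3} on each piece $W^{-1}(j)$, followed by an explicit contracting homotopy $h$ with $[d_1,h]=\pm p\cdot\id$ (note the use of characteristic $0$ here), and a direct identification of $H^0$ as $\mathbb{Z}_{\ell+1}$-coinvariants.

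A second, smaller omission: since $|\mathbf{seq}|(U)$ is the \emph{product} total complex of a bicomplex living in the quadrant $\{p\ge 0,\ q\le 0\}$, convergence of the column-filtration spectral sequence is not automatic. The paper invokes the complete convergence theorem [W, Th.~5.5.10] and the comparison theorem [W, Th.~5.5.11], checking that the filtration is complete and exhaustive and that the spectral sequence is regular (which uses Proposition \ref{lemmaregf} and Theorem \ref{theoremb3}); in fact the row filtration would fail here for lack of exhaustiveness. You should make this step explicit, since concluding ``$E_2$ is concentrated in $p=0$, hence $\pi$ is a quasi-isomorphism'' requires exactly this convergence and comparison input.
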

\begin{proof} (Compare with [BBM, Th. 3.10])
Fix a 2-disk $U$.
We consider $|\mathbf{seq}|(U)$ as a bicomplex $L^{\udot\udot}$, for which the $\ell$-th column is equal to $\underline{\Tot}(\mathbf{seq}(U)(\{I_\nu\};[\ell]))$ ($\ell$ is fixed). (Thus, this bicomplex seats in the quarter $\{(x,y)| x\ge 0, y\le 0\}$, where $x=\ell$). Denote by $\underline{\Tot}(\mathbf{seq}(U)(\{I_\nu\};[\ell]_s))$, $0\le s\le \ell$ the subcomplex in $\underline{\Tot}(\mathbf{seq}(U)(\{I_\nu\};[\ell]))$ spanned by maps $\sqcup_\nu I_\nu \to [\ell]$ which factor as $\sqcup_\nu I_\nu\to [0]\xrightarrow{i_s} [\ell]$, where $i_s(\{0\})=\{s\}$.

\begin{prop}\label{lemmaregf}
For any $\ell\ge 0$, the cohomology groups of the complex $\underline{\Tot}(\mathbf{seq}(U)(\{I_\nu\};[\ell]))$ are non-zero only in degree 0, and $H^0$ is isomorphic to the coinvariants of the cyclic group $\mathbb{Z}_{\ell+1}$ 
$$
\Big(\bigoplus_{0\le s\le \ell}\ H^0(\underline{\Tot}(\mathbf{seq}(U)(\{I_\nu\};[\ell]_s)))\Big)_{\mathbb{Z}_{\ell+1}}\simeq H^0(\underline{\Tot}(\mathbf{seq}(U)(\{I_\nu\};[0])))
$$
\end{prop}
As follows from Theorem \ref{theoremb3}, $\dim H^0(\underline{\Tot}(\mathbf{seq}(U)(\{I_\nu\};[0])))=1$. 

We prove Proposition \ref{lemmaregf} in Section \ref{sectionlemmaregf} below.

\hspace{2mm} 

We turn back to the proof of Proposition \ref{whiskprop}. 
The idea is to use the {\it complete} convergence theorem for spectral sequences (applied to the column filtration), see [W, Th. 5.5.10, Th. 5.5.11]. 
Consider its column filtration $\Phi_\ell=\prod_{i\ge \ell}\oplus_{j\le 0}L^{ij}$, it is an ascending, exhaustive, and {\it complete} filtration. Moreover, it is {\it regular} by Proposition \ref{lemmaregf} and Theorem \ref{theoremb3}  (see [W, 5.2.10]). Then this spectral sequence converges to the cohomology of the bicomplex, by [W, Theorem 5.5.10]. \footnote{Note that for the case of {\it non-completed} complex (the direct sum totalisation) the column filtration gives rise to divergent spectral sequence. Contrary, the row filtration spectral sequence converges by [W, Theorem 5.5.1]. In the {\it completed} case, [W, Theorem 5.5.1] is not applicable for the row filtration, because this filtration is not  {\it exhaustive}. Thus, for the bicomplex of Proposition \ref{whiskprop}, only the column filtration gives rise to convergent spectral sequence. }

Denote the column filtration on $|\mathbf{seq}|(U)$ by $\Phi$, it is a descending filtration $\Phi_0\supset \Phi_1\supset \Phi_2\supset\dots$. Then $E_0^{\ell *}=\underline{\Tot}(\mathbf{seq}(U)(\{I_\nu\};[\ell]))$,
the differential $d_0$ is the differential therein. The term $E_1^{pq}$ is given by Proposition \ref{lemmaregf}.

Consider the projection ${proj}: |\mathbf{seq}|(U)\to \mathbf{Breq}(U)$ mapping $0$-th column by the identity map, and the columns with $\ell>0$ to 0. It is a map of complexes (but certainly not a map of operads). One has ${proj}\circ w=\id$, so it is enough to prove that ${proj}$ is a quasi-isomorphism. 

We show that $proj$ is an isomorphism on the term $E_2^{**}$. 
By [W, Theorem 5.5.11], it implies that $proj$ induces an isomorphism on cohomology. 

The differential $d_1$ is the cochain differential which is the alternated sum of the face maps $[\ell]\to [\ell+1]$.
Thus, the complex $(E_1^{*q}, d_1)$ is the cochain complex of the {\it constant} (by Proposition \ref{lemmaregf}) cosimplicial vector space $H^q(\underline{\Tot}(\mathbf{seq}(U)(\{I_\nu\};[0])))$. (Hence, $E_1^{*q}=0$ for $q\ne 0$, by Theorem \ref{theoremb3}, but it is not important for this argument). 

Thus, the complex $(E_1^{*q},d_1)$ is 
$$
0\to X\xrightarrow{d_0-d_1}X\xrightarrow{d_0-d_1+d_2}X\to\dots
$$
where each $d_i=\id_X$ (here $X=H^q(\underline{\Tot}(\mathbf{seq}(U)(\{I_\nu\};[0])))$, and, in fact, $X=0$ for $q\ne 0$). This complex looks is
$$
0\to X\xrightarrow{0}X\xrightarrow{\id}X\xrightarrow{0}X\xrightarrow{\id}\dots
$$
and its cohomology is equal to $X$ in degree 0 and vanishes otherwise: $$E_2^{pq}=\begin{cases}0&p\ne0\\
H^q(\underline{\Tot}(\mathbf{seq}(U)(\{I_\nu\};[0])))&p=0
\end{cases}
$$
(Due to Theorem \ref{theoremb3}, $E_2^{pq}=0$ unless $p=0,q=0$, and $E_2^{00}=\k$). 

It follows that, at the term $E_2$, the cohohomology vanish except for 0th column, and, therefore, $proj$ induces an isomorphism at the terms $E_2$. Now Proposition \ref{whiskprop} follows from [W, 5.5.11].

\end{proof}

\section{\sc Normalised $\mathbf{Breq}$ and a new proof of contractibility of Tamarkin's 2-operad}\label{sectionb3}
\subsection{\sc }
Denote by $\Nor(-)$ the normalised realisation of a (poly)simplicial set in $C^\udot(\k)$. In particular, one gets complexes
$$
\Nor(\mathbf{Breq})(U):=\Nor_{\{I_\nu\}}(\mathbf{seq}(U)(\{I_\nu\}_{\nu\in \mathcal{F}(U)}; [0])
$$
These dg vector spaces form a dg 2-operad, denoted by $\Nor(\mathbf{Breq})$. One gets a zig-zag of two operads
$$
|\mathbf{seq}|\overset{w}{\leftarrow} \mathbf{Breq}\rightarrow  \Nor(\mathbf{Breq})
$$
The left-hand side whiskering map is a quasi-isomorphism by Proposition \ref{whiskprop}, and the right-hand side map is a quasi-isomorphism by basic properties of the normalisation. 

As a conclusion, the contractibility of the 2-operad $\Nor(\mathbf{Breq})$ implies the contractibility of the dg condensation of Tamarkin's 2-operad. 

Note that, from the point of view we adapt here, the main advantage of passing from $|\mathbf{seq}|$ to $\Nor(\mathbf{Breq})$ is computational: this passage greatly simplifies, as we are going to show, the computation of the cohomology of the operad.

\begin{theorem}\label{theoremb3}
Let $U$ be a 2-ordinal. The cohomology of $\Nor(\mathbf{Breq})(U)$ is isomorphic to $\k$ in degree 0 and vanishes for higher (negative) degrees. Moreover, the 2-operad $|\mathbf{seq}|$ is quasi-isomorphic to $\k$.
\end{theorem}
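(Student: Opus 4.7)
The plan is to reduce the second assertion to the first and then attack $H^*(\Nor(\mathbf{Breq})(U))$ by an explicit combinatorial analysis of the nondegenerate elements. For the reduction, the zig-zag
$$
|\mathbf{seq}|(U) \overset{w}{\leftarrow} \mathbf{Breq}(U) \to \Nor(\mathbf{Breq})(U)
$$
consists of two component-wise quasi-isomorphisms of dg 2-operads (the left one by Proposition \ref{whiskprop}, the right one by the classical fact that the normalisation map for a multi-simplicial vector space is a quasi-isomorphism). Since all three carry compatible 2-operad structures, once we show $\Nor(\mathbf{Breq})(U)$ is quasi-isomorphic to $\k[0]$ for every 2-disk $U$, the canonical augmentation $|\mathbf{seq}|\to\underline{\k}$ becomes a component-wise quasi-isomorphism of dg 2-operads, yielding the second claim.

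For the first claim I would begin by identifying the nondegenerate elements of the multi-simplicial complex explicitly. An element $\alpha\in\mathbf{seq}(U)(\{I_\nu\};[0])$ is a total order $I_\tot$ on $\sqcup_\nu I_\nu$ subject to (1)--(3) of Section \ref{tamopapp1}; pullback along the codegeneracy $s^i\colon [n]\to[n-1]$ in the $\nu$-variable duplicates position $i$ of $I_\nu$, placing the two copies adjacently in $I_\tot$. Hence $\alpha$ is $\nu$-degenerate iff some adjacent pair $i,i+1\in I_\nu$ is consecutive in $I_\tot$, and $\alpha$ survives the full normalisation iff for every $\nu$ with $|I_\nu|\ge 2$ and every $i$, at least one element from another $I_{\nu'}$ sits strictly between positions $i$ and $i+1$ of $I_\nu$. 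By conditions (2)--(3), any such separating $\nu'$ must satisfy $\pi_U(\nu')<\pi_U(\nu)$ and realise the brace case (3c); thus nondegenerate configurations are \emph{well-nested bracings} of minimal balls by $\pi_U$-lower ones, which drastically rigidifies the combinatorics.

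I would then establish acyclicity by constructing an explicit contracting homotopy $h$ on this small nondegenerate complex, together with an augmentation $\epsilon\colon\Nor(\mathbf{Breq})(U)\to\k$ and a section $\eta$ picking a distinguished degree-$0$ representative $e_U$ (the order in which each $|I_\nu|=1$ and the $\nu$'s appear in their natural left-to-right order in $U$), satisfying $dh+hd=\id-\eta\epsilon$. The homotopy $h$ would "peel off" the maximal element of a distinguished $I_\nu$, processing the $\nu$'s in order of \emph{decreasing} $\pi_U(\nu)$ so that the most deeply nested minimal ball is contracted first, after any potential brace partners have been stabilised. Alternatively, one may induct on the lexicographic pair $(|\mathcal{F}(U)|,\sum_\nu(|I_\nu|-1))$, with the base case $|\mathcal{F}(U)|=1$ (where condition (1) forces the total order to be unique) reducing to the constant simplicial set and hence trivially to $\k[0]$.

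The principal obstacle is that any candidate $h$ must simultaneously respect conditions (1)--(3) in \emph{every} simplicial variable and must never land in a degenerate element (otherwise the formula becomes meaningless in $\Nor(\mathbf{Breq})(U)$). The naive choice — e.g.\ removing the globally last point of $I_\tot$ — tends either to violate a brace (3c) or to produce a degenerate image. The ordering by decreasing $\pi_U$ suggested above appears to circumvent both problems, but verifying $dh+hd=\id-\eta\epsilon$ requires a careful case-by-case analysis of how the contraction interacts with each of (3a), (3b), (3c), including a subtle sign computation tracking the brace case via the rules \eqref{signss1}--\eqref{signss2}; this case analysis is the central technical calculation of the proof.
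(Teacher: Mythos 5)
Your reduction of the second assertion to the first via the zig-zag, and your identification of the nondegenerate configurations (only minimal balls $\nu'$ with $\pi_U(\nu')<\pi_U(\nu)$ in the brace case (3c) can separate adjacent elements of $I_\nu$, so survivors of normalisation are well-nested bracings) are both correct and consistent with what the paper does. But the core of your argument --- the operator $h$ with $dh+hd=\id-\eta\epsilon$ --- is only described, not constructed, and this is precisely where the difficulty lives. You acknowledge the obstacle yourself: removing the maximal element of a distinguished $I_\nu$ can leave a former brace partner $I_{\nu'}$ no longer separated, so the image is degenerate, and "processing the $\nu$'s in order of decreasing $\pi_U$" is ambiguous as stated (a single operator? a sum over columns? a composition?). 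A naive sum or composition of per-column peeling operators does not satisfy the homotopy identity on the total complex; making "one column at a time" rigorous requires a filtration argument. A further small confusion: the signs \eqref{signss1}--\eqref{signss2} govern the whiskering map $w\colon\mathbf{Breq}\to|\mathbf{seq}|$, not the differential of $\Nor(\mathbf{Breq})(U)$, which is just the alternating sum of face maps in each $I_\nu$-variable with the Koszul signs of a polysimplicial total complex; they play no role in this computation.

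The paper takes a different and more structured route that you may want to compare with. For a two-column disk $U=(n_1,n_2)$ it identifies $\Nor(\mathbf{Breq})(U)$ with the degree-$n_1$ graded piece of the $n_2$-fold tensor product over $A=\k[t]$ of the reduced complex $\Hoch_\ldot(A,A\otimes A)$ (Section \ref{ssecnorbrec2}): a type-2 configuration $X_1\lambda X_2\lambda\cdots X_k\lambda X_{k+1}$ becomes $t^{|X_1|}\otimes(t^{|X_2|}\otimes\cdots\otimes t^{|X_k|})\otimes t^{|X_{k+1}|}$. Freeness of the bimodule $A\otimes A$ then gives acyclicity in negative degrees and a one-dimensional $H^0$ in each graded piece --- this is where your contracting homotopy is implicitly hiding, as the standard extra degeneracy of the bar resolution. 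For general $U=(n_1,\dots,n_k)$ the paper runs an iterated spectral sequence, at each stage taking $D_1=d_{k-\ell+1}$ (the differential of one column) and $D_2$ the sum of the remaining ones (Section \ref{ssecnorbrecgen}); convergence is automatic since the complex is $\mathbb{Z}_{\le 0}$-graded. This is exactly the rigorous version of your "decreasing $\pi_U$" ordering, and it avoids having to write a single global homotopy. If you want to keep your homotopy-based approach, you should either restrict $h$ to one column at a time and wrap it in the same spectral-sequence (or homological perturbation) scaffolding, or write out the global $h$ and the full case analysis explicitly; as it stands the central identity is asserted rather than proved.
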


We prove Theorem \ref{theoremb3} below in Section \ref{sectionb3}.

\vspace{2mm}

Note that the complexes $\Nor(\mathbf{Breq})(U)$ are $\mathbb{Z}_{\le 0}$-graded. The terms are linear combinations of all total orders of $\sqcup I_\nu$, satisfying (1)-(3) of Section \ref{tamopapp1}, where $\nu\in\mathcal{F}(U)$, $I_\nu\in \Delta$.
Such total order has degree $-\sum_\nu \ell_\nu$, where $I_\nu=[\ell_\nu]$. We call different $\nu$ {\it colors}. 
Moreover, we consider the {\it normalised} complex, which means the quotient complex
$$
\Nor(\mathbf{Breq})(U)=\mathbf{Breq}(U)/\mathbf{Breq}(U)_0
$$
where $\mathbf{Breq}(U)_0$ is the subcomplex formed by the linear combinations of total orders, in which {\it at least two consecutive elements have the same color}. Indeed, any such linear combination is the image of the corresponding degeneracy map. Note that the number of all elements of color $\nu$ in the total order is equal to $\ell_\nu+1=|I_\nu|$. \\

Before dealing with the general case, consider some examples.

\begin{example}{\rm
Let the 2-disk $U$ be a single length $n$ column, $U=(n)$. Then the colors $\nu_1<\nu_2<\dots<\nu_n$ form a totally ordered set. By condition (2) of Section \ref{tamopapp1}, in the total order $I_{\nu_1}<I_{\nu_2}<\dots<I_{\nu_n}$. Thus, there is the only total order which fulfils (1)-(3):
$$\underset{\ell_1+1}{\underbrace{\nu_1,\dots,\nu_1}}\underset{\ell_2+1}{\underbrace{\nu_2,\dots,\nu_2}}\dots \underset{\ell_n+1}{\underbrace{\nu_n,\dots,\nu_n}}
$$
On the other hand, this element belongs to $\mathbf{Breq}(U)_0$, if at least one $\ell_i>0$. It follows that the only non-zero element in the quotient-complex is
$$
\nu_1\nu_2\dots\nu_n
$$
for which $I_{\nu_i}=[0]$ for $i=1,\dots,n$. Its degree is 0. The cohomology is $\k$ and is concentrated in degree 0. 
}
\end{example}

\begin{example}{\rm
Consider the case $U=(1,1)$, it has two columns, each of length 1. Denote by $\nu_1$ and $\nu_2$ the colors, and assume that $\pi_U(\nu_1)<\pi_U(\nu_2)$. Due to (1)-(3) of Section \ref{tamopapp1}, any total order is of one of the following types:
$$
\begin{gathered}
\nu_1\dots\nu_1\nu_2\dots\nu_2\\
\nu_2\dots\nu_2\nu_1\dots\nu_1\\
\underset{a}{\underbrace{\nu_2\dots\nu_2}}\nu_1\dots\nu_1\underset{\ell_2+1-a}{\underbrace{\nu_2\dots\nu_2}}, 1\le a\le \ell_2
\end{gathered}
$$
The first two configurations are 0 in the normalised complex unless $\ell_1=\ell_2=0$. The non-zero cases are $\nu_1\nu_2$ and $\nu_2\nu_1$, they both have degree 0. The third configuration is non-zero only when it is $\nu_2\nu_1\nu_2$, in this case $\ell_1=0,\ell_2=1$, and the element has degree -1. The complex we get is
$$
0\to\underset{\text{degree }-1}{\k}\xrightarrow{1\to(1,-1)}\underset{\text{degree }0}{\k\oplus\k}\to 0
$$
The cohomology is $\k$ in degree 0. 
}
\end{example}

\subsection{\sc The case of a general 2-disk $U$ with two columns}\label{ssecnorbrec2}
Let $U=(n_1,n_2)$ be a disk with 2 columns. Denote by $\nu_1<\dots<\nu_{n_1}$ the colors of the left-hand column, and by $\lambda_1<\dots<\lambda_{n_2}$ the colors of the right-hand column. 

We start with the case $n_2=1$ and arbitrary $n_1$. 
All normalised configurations satisfying (1)-(3) of Section \ref{tamopapp1} can be listed as follows:
\begin{itemize}
\item[(type 1)] the color $\lambda=\lambda_1$ has multiplicity 1 in the total order, such normalised configurations are 
\begin{equation}\label{type1el}
\nu_1\nu_2\dots\nu_a\lambda\nu_{a+1}\dots\nu_{n_1}
\end{equation}
Such configurations have degree 0.
\item[(type 2)] the color $\lambda$ has multiplicity $N>1$. Such total order has the following form:
\begin{equation}\label{type2el}
X_1\lambda X_2\lambda X_3\dots X_{k}\lambda X_{k+1}
\end{equation}
where each $X_i$ is $\nu_a\nu_{a+1}\dots\nu_{a+b}$, $b\ge 0$, and 
$X_1X_2\dots X_k=\nu_1\nu_2\nu_3\dots\nu_{n_1}$. Such configuration has degree $-N+1$. 
\end{itemize} 
The differential of a configuration is a sum, with appropriate signs, of configurations  obtained by removing of an element whose color has multiplicity $>1$, when after such removal we get two elements of the same color staying in turn, such configuration is considered with 0 coefficient. 

For example,
$$
d(\nu_1\lambda\nu_2\nu_3\lambda\nu_4)=\nu_1\nu_2\nu_3\lambda\nu_4-\nu_1\lambda\nu_2\nu_3\nu_4
$$

It is clear that the differential of a configuration of type 2 may be a configuration of type 1, whilst the differential is 0 at any configuration of type 1. 

We provide now an algebraic description of the complex we get. 

Let $A=\k[t]$ be the associative algebra over $\k$ of polynomials in one variable. Consider the normalised (reduced) homological Hochschild complex 
$\Hoch_\ldot(A,A\otimes A)$ with coefficients in the rank 1 free bimodule\footnote{Certainly, this complex is isomorphic to the reduced bar-resolution of $A$ in the category of $A$-bimodules.}. Its elements are 
\begin{equation}\label{hochhom}
t^{M_1}\otimes (t^{a_1}\otimes t^{a_2}\otimes\dots\otimes t^{a_k})\otimes t^{M_2},\ \text{ all } a_i>0, M_1,M_2\ge 0
\end{equation}
Such element has degree $-k$.

Moreover the complex $\Hoch_\ldot(A,A\otimes A)$ is a direct sum of {\it complexes}
$$
\Hoch_\ldot(A,A\otimes A)=\bigoplus_{N\ge 0}\Hoch_\ldot^{(N)}(A,A\otimes A)
$$
where $\Hoch_\ldot^{(N)}(A,A\otimes A)$ is formed by the elements \eqref{hochhom} for which 
$$
M_1+a_1+a_2+\dots+a_k+M_2=N
$$
\begin{lemma}
Each complex $\Hoch_\ldot^{(N)}(A,A\otimes A)$ is quasi-isomorphic to $\k[0]$.
\end{lemma}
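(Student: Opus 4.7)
The plan is to recognise $\Hoch_\ldot(A, A\otimes A)$ as the reduced bar resolution of $A=\k[t]$ as a bimodule over itself, and then split everything according to the internal grading of $A$.

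Concretely, the complex whose degree $-k$ elements are of the form \eqref{hochhom}, with the differential obtained from consecutive multiplications in $A$, is exactly the reduced bar complex $A\otimes\bar A^{\otimes k}\otimes A$, augmented by the multiplication map $\epsilon\colon A\otimes A\to A$, $t^{M_1}\otimes t^{M_2}\mapsto t^{M_1+M_2}$. It is a classical fact that this augmented complex is a free resolution of $A$ in $A$-bimodules; equivalently, the standard contraction
$$
s\bigl(t^{M_1}\otimes t^{a_1}\otimes\dots\otimes t^{a_k}\otimes t^{M_2}\bigr)=1\otimes t^{M_1}\otimes t^{a_1}\otimes\dots\otimes t^{a_k}\otimes t^{M_2},
$$
together with $s(t^N)=1\otimes t^N$, is a contracting homotopy of the augmented unreduced bar complex, and the quotient to the reduced one is a quasi-isomorphism because $\bar A = tA$ is $\k$-free.

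Now equip $A$ with the grading in which $t$ has weight $1$. Every structure in sight preserves this weight: the augmentation $\epsilon$, the bar differential (which only multiplies neighbouring tensor factors and hence adds their $t$-exponents), and the contracting homotopy $s$ (which inserts the weight-zero element $1$). Thus the augmented reduced bar complex splits as a direct sum over $N\ge 0$ of its weight-$N$ subcomplexes, and the subcomplex corresponding to weight $N$ is precisely the augmentation
$$
\Hoch^{(N)}_\ldot(A,A\otimes A)\xrightarrow{\ \epsilon\ }\k\cdot t^N\cong\k[0].
$$
Because $s$ preserves weight, its restriction to the weight-$N$ piece is still a contracting homotopy, so this augmentation is a quasi-isomorphism.

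The only mildly delicate point, which I would just cite rather than reprove, is the passage from the unreduced to the reduced bar complex; given that, the argument is purely formal, and no genuine computational obstacle arises.
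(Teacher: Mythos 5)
Your argument is correct and is essentially the paper's own proof: the paper likewise identifies the complex with the reduced bar resolution of $A$ (this is even stated in a footnote), deduces acyclicity in higher degrees from freeness of the bimodule $A\otimes A$ with $H_0=A=\k[t]$, and then takes the weight-$N$ graded piece, which is $\k\cdot t^N$. Your version merely makes explicit the contracting homotopy and its compatibility with the $t$-weight grading, which the paper leaves implicit.
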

\begin{proof}
It is clear that the higher cohomology of $\Hoch_\ldot(A,A\otimes A)$, because $A\otimes A$ is a free bimodule, and degree 0 cohomology is $A=\k[t]$. This degree 0 cohomology has a 1-dimensional subspace of grading $N$, generated by $t^N$. 
\end{proof}

Turn back to the 2-disk $U=(n_2,1)$. One identifies the complex of all configurations of type 1 and type 2 with the complex 
$\Hoch_\ldot^{(N)}(A,A\otimes A)$ for $N=n_1$. Indeed, one associates with type 1 element \eqref{type1el} the element
$$
t^{M_1}\otimes t^{M_2}\in \Hoch_\ldot^{(N)}(A,A\otimes A), M_1=a, M_2=n_1-a
$$
and one identifies with type 2 element \eqref{type2el} the element
$$
t^{M_1}\otimes (t^{a_1}\otimes t^{a_2}\otimes\dots\otimes t^{a_k})\otimes t^{M_2}\in \Hoch_\ldot^{(N)}(A,A\otimes A)
$$
where $M_1=\deg(X_1), a_1=\deg(X_2), \dots, a_k=\deg(X_k), M_2=\deg(X_{k+1})$, where by $\deg(-)$ we mean the number of elements in the substring. 

One checks that the differentials act accordingly, and thus we have identified the complex $\Nor(\mathbf{Breq})(U)$ with the (reduced) Hochschild complex $\Hoch_\ldot^{(N)}(A,A\otimes A)$ for $N=n_1$. It completes the proof of contractibility, for $U=(n_1,1)$.

\vspace{4mm}

Now consider the general case of $U=(n_1,n_2)$. In this case, we have, for each $\lambda_i$, the corresponding complex. We identify $\Nor(\mathbf{Breq})(U)$ with the degree $N=n_1$ part in the tensor product:
\begin{equation}\label{hochiter}
\big[\underset{n_2\text{ factors}}{\Hoch_\ldot(A,A\otimes A)\otimes_A\dots\otimes_A \Hoch_\ldot(A,A\otimes A)}\big]^{(n_1)}
\end{equation}
Here we mean the following. The complex 
$$
\underset{n_2\text{ factors}}{\Hoch_\ldot(A,A\otimes A)\otimes_A\dots\otimes_A \Hoch_\ldot(A,A\otimes A)}
$$
is graded, by the total sum of degrees of $t$ in an element. Thus, for any $N\ge 0$, we get a degree $N$ subcomplex. 
\begin{lemma}
For each $N\ge 0$, the complex
$$
\big[\underset{n_2\text{ factors}}{\Hoch_\ldot(A,A\otimes A)\otimes_A\dots\otimes_A \Hoch_\ldot(A,A\otimes A)}\big]^{(N)}
$$
is quasi-isomorphic to $\k[0]$, $n_2\ge 1$. 
\end{lemma}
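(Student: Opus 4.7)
The plan is to recognize $\Hoch_\ldot(A, A\otimes A)$ as the reduced two-sided bar resolution of $A = \k[t]$ as an $A$-bimodule, and then iterate. The previous lemma already supplies a quasi-isomorphism of $A$-bimodules $\epsilon\colon \Hoch_\ldot(A, A\otimes A) \xrightarrow{\sim} A$ (the augmentation sending $t^{M_1} \otimes t^{M_2} \mapsto t^{M_1+M_2}$ on the length-zero part and vanishing on the bar components of length $\geq 1$). Each term $A \otimes \bar A^{\otimes k} \otimes A$ of this complex is a free, hence flat, $A$-bimodule, and the complex is cohomologically bounded above (concentrated in degrees $\leq 0$). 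Consequently, it is K-flat from either side over $A$, so tensoring over $A$ with $\Hoch_\ldot(A, A\otimes A)$ preserves quasi-isomorphisms.

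I would then prove by induction on $n_2 \geq 1$ that the iterated tensor product
$$
C_{n_2} := \underbrace{\Hoch_\ldot(A,A\otimes A) \otimes_A \cdots \otimes_A \Hoch_\ldot(A,A\otimes A)}_{n_2\text{ factors}}
$$
is quasi-isomorphic to $A$ as a graded $A$-bimodule. The base case $n_2 = 1$ is the previous lemma. For the inductive step, tensoring the quasi-isomorphism $C_{n_2-1} \xrightarrow{\sim} A$ on the right by $\Hoch_\ldot(A, A\otimes A)$ over $A$ (which preserves quasi-isomorphisms by the K-flatness noted above) yields
$$
C_{n_2} \xrightarrow{\sim} A \otimes_A \Hoch_\ldot(A, A\otimes A) = \Hoch_\ldot(A, A\otimes A) \xrightarrow[\epsilon]{\sim} A.
$$

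All the complexes, differentials, and maps above respect the weight grading by total $t$-degree, so the composite $C_{n_2} \xrightarrow{\sim} A$ is a map of graded $A$-bimodules. Restricting to the weight-$N$ component therefore gives
$$
\bigl[C_{n_2}\bigr]^{(N)} \xrightarrow{\sim} A^{(N)} = \k \cdot t^N \simeq \k[0],
$$
which is the assertion of the lemma. The argument is essentially routine once the bar-resolution interpretation is in place; the only points requiring any care are the K-flatness of the bar complex (the standard fact that a bounded-above complex of flat modules is K-flat) and the observation that each cohomological degree of $C_{n_2}$ is a \emph{finite} direct sum of terms (since each factor is cohomologically bounded above), so that there is no ambiguity between direct-sum and product totalization of the multi-complex.
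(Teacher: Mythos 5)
Your proof is correct and follows essentially the same route as the paper: the paper likewise observes that each factor $\Hoch_\ldot(A,A\otimes A)$ is a complex of free $A$-bimodules, concludes that the cohomology of the $n_2$-fold tensor product over $A$ is $A\otimes_A\dots\otimes_A A=A$ concentrated in degree $0$, and then restricts to the weight-$N$ component. Your version merely makes explicit the K-flatness and induction that the paper leaves implicit.
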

\begin{proof}
Each factor $\Hoch_\ldot(A,A\otimes A)$ is a complex of free $A$-bimodules. It follows that 
$$
H^\udot\Big(\Hoch_\ldot(A,A\otimes A)^{{\otimes_A} n}\Big)=\Big[H^\udot(\Hoch_\ldot(A,A\otimes A))\Big]^{\otimes_A n}
$$
Then the cohomology is $A\otimes_A A\otimes_A\dots\otimes_A A=A$ and is concentrated in degree 0. Then each graded component has 1-dimensional cohomology. 
\end{proof}
The identification of the complex \eqref{hochiter} with $\Nor(\mathbf{Breq})(U)$ is fairly analogous to the case $n_2=1$, considered above. 

It completes the proof of contractibility for $U=(n_1,n_2)$. 

\qed

\subsection{\sc The contractibility of $\Nor(\mathbf{Breq})(U)$ for general $U$}\label{ssecnorbrecgen}
Let $U=(n_1,\dots,n_k)$ be a general 2-disk. With each minimal ball $\nu$ is associated a 1-ordinal $I_\nu$, and hence a differential after the dg realisation by $I_\nu$. The complex $\Nor(\mathbf{Breq})(U)$ is the (sum) total complex of a poly-complex, with differentials associated with different minimal balls $\nu$. Denote by $d_i$ the sum of these differentials for all $\nu$ in the $k$-th column, $1\le i\le k$. They clearly commute with each other (moreover, the differentials associated with different $\nu$ commute). 

We use spectral sequences for computing the cohomology of $\Nor(\mathbf{Breq})$, as follows. 

We organise our polycomplex as a bicomplex, in which $D_1=d_k, D_2=d_1+\dots+d_{k-1}$, and compute the cohomology of $D_1$ at  first. The spectral sequence converges, as the polycomplex is $\mathbb{Z}_{\le 0}$-graded. One computes the cohomology of $D_1$ similarly with our computation in Section \ref{ssecnorbrec2}, via the Hochschild complex interpretation. The result is that the non-zero cohomology exist only  in degree 0 by the rightmost column, and all colors from the rightmost column appear with multiplicity 1 in cohomology. 

Next, we have to compute cohomology of the differential $d_1+\dots+d_{k-1}$. Set $D_1=d_{k-1}, D_2=d_1+\dots+d_{k-2}$, and use the (convergent) spectral sequence which computes the cohomology of $D_1$ at first. Note that the elements from the the $k$-th column do not affect this computation. Once again, we get that the cohomology are non-zero only when the colors of all elements of $(k-1)$-st column have multiplicity 1, and is concentrated in degree 0. This we get that the multiplicities of all elements from the two rightmost columns have multiplicity 0 in cohomology.

We repeat arguing in this way, so that at the $\ell$-th step $D_1=d_{k-\ell+1}, D_2=d_1+\dots+d_{k-\ell}$, $1\le \ell\le k$.
Finally we get that the cohomology of $\Nor(\mathbf{Breq})(U)$ are non-zero only in degree 0, and for the representing configurations the multiplicities of all colors are equal to 1. 

It remains to show that all such configurations in degree 0 are cohomologous, which we leave to the reader. We get a projection $|\mathbf{seq}|(U)\to \k$, which is a quasi-isomorphism, for any $U$. It is straightforward that it gives to a map of 2-operads $|\mathbf{seq}|\to\underline{\k}$. 

Theorem \ref{theoremb3} is proven.

\qed

\comment

At first, $d_1=0$ (for normalised configurations satisfying (1)-(3) of Section \ref{tamopapp1}).
Consider the spectral sequence of the bicomplex, whose first differential is $D_1=d_1+d_2$, and the second $D_2=d_3+d_4+\dots+d_k$, which firstly computes the cohomology of $D_1$. Our complexes are $\mathbb{Z}_{\le 0}$-graded, so the spectral sequence converges by dimensional reasons. 

One can compute the cohomology of $D_1$ similarly to the computation in Section \ref{ssecnorbrec2}. The only difference is that we have ``several connectness compontents'' by which the images of $I_\nu$ for $\nu$ in the first two columns is divided by images of $I_\nu$ with $\pi(\nu)\ge 3$. It results in the tensor product over $\k$ of several complexes for each of which the cohomology can be computed as in Section \ref{ssecnorbrec2}.

To perform the inductive procedure, we interpret this computation as follows.

In the notations as above, denote by $K_2(U)$ the normalised totalisation of $\mathbf{seq}_2(U)(\{I_\nu\};[0])$ where $\mathbf{seq}_2(U)$ denotes the set defined as $\mathbf{seq}(U)$ with condition (3c) removed, when $\pi(\nu_1)=1,\pi(\nu_2)=2$. One has the natural embedding $i_2\colon K_2(U)\to\Nor(\mathbf{Breq})(U)$.

\begin{lemma}
The embedding $i_2$ is a quasi-isomorphism of complexes.
\end{lemma}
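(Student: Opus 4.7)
The plan is to prove the lemma by showing that the quotient complex $\Nor(\mathbf{Breq})(U)/K_2(U)$ is acyclic, which by the long exact sequence in cohomology gives the desired quasi-isomorphism. The quotient is spanned by configurations in which at least one minimal ball $\nu_1$ in column 1 has $I_{\nu_1}$ sandwiched inside some $I_{\nu_2}$ with $\pi_U(\nu_2)=2$ via case (3c) of Section \ref{tamopapp1}.

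First I would introduce a filtration on $\Nor(\mathbf{Breq})(U)$ indexed by the number of column-1 balls $\nu_1$ whose block $I_{\nu_1}$ is interleaved with some column-2 block via case (3c). The differentials $d_i$ only remove elements, and removing an element from $I_{\nu_1}$ (for $\nu_1$ in column 1) or from $I_{\nu_2}$ (for $\nu_2$ in column 2) can destroy such a brace but never create one, so this is a descending filtration preserved by the full differential. The resulting spectral sequence converges (our complexes are $\mathbb{Z}_{\le 0}$-graded). The filtration-0 piece is exactly $K_2(U)$, so it suffices to show that all strictly positive filtration quotients have trivial $E_1$.

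Second, I would analyse the $E_0$-differential on a fixed filtration quotient by upgrading the Hochschild-complex interpretation of Section \ref{ssecnorbrec2} to the present situation. For a fixed choice of which column-1 blocks participate in column-1/column-2 braces, the relevant part of the complex decomposes, combinatorially, as a tensor product over $\k$ of pieces indexed by the column-2 balls $\lambda_j$: each such piece is modeled on the reduced Hochschild complex $\Hoch_\bullet(A,A\otimes A)$ for $A=\k[t]$, exactly as in Section \ref{ssecnorbrec2}, but with the $A\otimes A$-bimodule position replaced by a bar-resolution-like expression encoding the sandwiched column-1 blocks. Since $A\otimes A$ is a free $A$-bimodule, the bar-resolution part is acyclic in each graded component, and the usual contracting homotopy (extending the one implicit in Section \ref{ssecnorbrec2}) kills all terms of strictly positive filtration.

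The main obstacle will be the bookkeeping required to disentangle the column-1/column-2 braces from braces that involve higher columns $j>2$: a single column-1 ball $\nu_1$ can be simultaneously braced by column-2 and column-$j$ balls, and one must verify that the $E_0$-differential treats only the column-1/column-2 interactions, preserving the ambient column-$j$ structure as an inert tensor factor. Once one checks that (3c) interactions between columns $1$ and $j\ge 3$ factor through the same bar-complex machinery independently of the column-2 structure, the contracting homotopy is well-defined, commutes with the remaining differentials $d_3,\dots,d_k$ up to the filtration, and completes the proof.
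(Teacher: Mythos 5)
Your strategy (reduce to acyclicity of $\Nor(\mathbf{Breq})(U)/K_2(U)$, filter by the number of column-$1$/column-$2$ braces, and model the associated graded on the Hochschild complexes of Section \ref{ssecnorbrec2}) is essentially the one sketched in the source next to this lemma, but the decisive step fails: the strictly positive filtration quotients do \emph{not} have trivial $E_1$, and in fact the lemma itself is false as stated. Already for $U=(1,1)$ (the second example of Section \ref{sectionb3}) the complex $\Nor(\mathbf{Breq})(U)$ is $\k\to\k\oplus\k$, with the degree $-1$ generator the braced configuration $\nu_2\nu_1\nu_2$ and differential $1\mapsto(1,-1)$, so the cohomology is $\k$ in degree $0$; whereas $K_2(U)$ consists of the two unbraced degree-$0$ generators $\nu_1\nu_2,\nu_2\nu_1$ with zero differential, so $H^0(K_2(U))=\k^2$ and $i_2$ is not a quasi-isomorphism. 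In your setup the quotient $\Nor(\mathbf{Breq})(U)/K_2(U)$ is exactly the filtration-one graded piece: it is one-dimensional in degree $-1$ with zero induced differential (the differential of $\nu_2\nu_1\nu_2$ lands entirely in $K_2(U)$), so its $E_1$ is $\k\neq 0$, and no choice of filtration can make the quotient acyclic.

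The conceptual error is the phrase ``the bar-resolution part is acyclic in each graded component''. The complex $\Hoch_\ldot^{(N)}(A,A\otimes A)$ is a resolution of $\k$ placed in degree $0$, not an acyclic complex: its $H^0$ is the one-dimensional \emph{quotient} of the $(N+1)$-dimensional span of the type-1 (unbraced) configurations by the boundaries of the type-2 (braced) ones. The braced configurations are therefore not homotopically redundant --- they are precisely what identifies the $N+1$ unbraced degree-$0$ classes with one another --- and discarding them inflates $H^0$ from $\k$ to $\k^{N+1}$. (This is presumably why the passage containing this lemma is commented out of the compiled paper: carried to its end, the chain $K_k(U)\to\dots\to K_2(U)\to\Nor(\mathbf{Breq})(U)$ would compute the cohomology of $\Nor(\mathbf{Breq})(U)$ as the span of \emph{all} multiplicity-one degree-$0$ configurations, contradicting Theorem \ref{theoremb3}.) The argument actually retained in Section \ref{ssecnorbrecgen} never replaces $\Nor(\mathbf{Breq})(U)$ by $K_2(U)$; it runs the column-by-column spectral sequence on the full complex and only at the very end checks that the surviving degree-$0$ configurations are all cohomologous --- an identification effected exactly by the braced configurations you are trying to discard. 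The correct statement in this direction is only that $i_2$ is surjective on cohomology; to obtain an isomorphism one must further divide $H^0(K_2(U))$ by the images of the braces.
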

\begin{proof}
We prove that $\Cone(i_2)$ is acyclic, by the spectral sequence of bicomplex, computing the cohomology of $D_1$ at first. The computation itself is fairly the same as above.
\end{proof}

So we replace $\Nor(\mathbf{Breq})(U)$ by $K_2(U)$. In has that advantage that in any configuration of $K_2(U)$ any color $\nu$ with $\pi(\nu)=1,2$ {\it occurs with multiplicity 1}.

Next, consider the complex $K_3(U)$, defined as the normalised totalisation of $\mathbf{seq}_3(U)(\{I_\nu\};[0])$, where $\mathbf{seq}_3(U)$ is the set of configurations defined as $\mathbf{seq}(U)$, with (3c) removed for $\pi(\nu)=1,2,3$. There is an embedding 
$i_3\colon K_3(U)\to K_2(U)$. We prove that $i_3$ is quasi-isomorphism, for which we prove that the cone $\Cone(i_3)$ is acyclic. Now we set $D_1=d_1+d_2+d_3, D_2=d_4+\dots+d_k$, and make use the (convergent) spectral sequence in which the cohomology of $D_1$ at first. The cohomology of $D_1$ is computed similarly to the computation for $K_2(U)$, and so on.

Finally we define $K_k(U)$ in which (3c) is removed for any $1\le \nu_1,\nu_2\le k$, and prove similarly that the natural embedding $i_k\colon K_k(U)\to K_{k-1}(U)$ is a quasi-isomorphism.

We get a chain of quasi-isomorphisms
$$
K_k(U)\overset{\sim}{\rightarrow}K_{k-1}(U)\overset{\sim}{\rightarrow}\dots \overset{\sim}{\rightarrow}K_2(U)\overset{\sim}{\rightarrow}\Nor(\mathbf{Breq}(U)
$$
Note that in $K_k(U)$ {\it each} color $\nu$ has multiplicity 1, and that $K_k(U)$ is in fact a vector space concentrated in degree 0. 

\endcomment

\section{\sc A proof of Proposition \ref{lemmaregf}}\label{sectionlemmaregf}
It is enough to compute the cohomology of the {\it normalised} chain complex $\underline{\Nor}(\mathbf{seq}(\{I_\nu\}; [\ell]))$, for a fixed $\ell$. The proof is based on ideas employed in the computation of $\underline{\Nor}(\mathbf{seq}(\{I_\nu\};[0]))$ in Section \ref{sectionb3}.

First of all, we stress the following difference between the normalised chains for $\ell=0$ and for $\ell\ge 1$. For the case $\ell\ge 1$ one may have two {\it consecutive} elements of the same color $\lambda$ in the total order on $\sqcup I_\nu$ if their images under the map of 1-ordinals $W\colon \sqcup I_\nu\to [\ell] $ are {\it different} elements of $[\ell]$. Indeed, such elements are not boundaries. 

Therefore, the situation is more complicated when $\ell\ge 1$. We proceed as follows.

For a monomial element $\omega$ of $\underline{\Nor}(\mathbf{seq}(\{I_\nu\};[\ell]))$ and for a color $\lambda$ denote $$m(\lambda)=\text{(the multiplicity of $\lambda$ in $\omega$)}-1$$  The {\it total multiplicity degree} $M(\omega)$ is defined as $\sum_\lambda m(\lambda)$. 

We compute the cohomology of $\underline{\Nor}(\mathbf{seq}(\{I_\nu\};[\ell]))$ using spectral sequence, associated with the descending filtration $F_0\subset F_{-1}\subset F_{-2}\subset \dots $, where $F_{-k}$ is formed by linear combinations of monomials $\omega$ with $M(\omega)\le k$. The components $E_0^{pq}\ne 0$ for $p\le 0, q\le 0$. 
It converges to the cohomology of $\underline{\Nor}(\mathbf{seq}(\{I_\nu\};[\ell]))$ by the ``classical convergence theorem'' [W, Theorem 5.5.1]. 

The differential $d_0$ computes the cohomology in the quotient-complexes $F_i/F_{i-1}$. This amounts to computing the cohomology of the differential component acting on the subcomplexes $W^{-1}(j), j\in [\ell]$. Moreover, the total differential $d$ on $\underline{\Nor}(\mathbf{seq}(\{I_\nu\};[\ell]))$ equals to a sum of commuting differentials $d_{(j)}$, where $d_{(j)}$ acts only on $W^{-1}(j)$. It can be computed similarly to the computation in Section \ref{sectionb3}. The result is that $H^0(W^{-1}(j))=\k$, and the higher cohomology vanishes. In other words, $E_1^{p,q}=0$ for $q\ne 0$.

In particular, each color appears in $H^0(W^{-1}(j))$ with multiplicity $\le 1$. However, the same color $\lambda$ may have the multiplicity 1 for several different $j$. 

The component $d_1$ decreases this total multiplicity by 1. Any component of the differential in $\underline{\Nor}(\mathbf{seq}(\{I_\nu\};[\ell]))$ may either preserve $M(\omega)$ or decrease it by 1. Hence, the spectral sequence collapses at $E_2$.
It remains to compute the cohomology of the differential $d_1\colon E_1^{p,q}\to E_1^{p+1,q}$.

We proceed as follows: we show that the cohomology vanish unless the degree is 0, and then we compute the degree 0 cohomology directly. 

Let us have a closer look at the complex 
$$
\dots\to E_1^{-2,0}\xrightarrow{d_1} E_1^{-1,0}\xrightarrow{d_1}E_1^{0,0}\to 0
$$
Its rightmost term $E_1^{0,0}$ can be described as $\k^{N}$ where $N$ is the number of elements in  $\Delta ([\sharp \mathcal{F}(U)-1], [\ell])$. The higher terms can be described similarly, but the multiplicities of some colors (recall that a color is just an element in $\mathcal{F}(U)$) may be greater than 1. Terms in $E_1^{-p,0}$ are spanned by monomials $\omega$ with $M(\omega)=p$.

We claim that this complex is acyclic in higher degrees. It is proven by constructing a contracting homotopy $h\colon E_1^{-*,0}\to E_1^{-*-1,0}$. It is 
$$
h=\sum_{\nu\in\mathcal{F}(U)}\sum_{j\in [\ell]}\pm  i_{\nu,j}$$
where $i_{\nu,j}$ ``adds'' the color $\nu$ to $W^{-1}(j)$ if the multiplicity of $\nu$ in $W^{-1}(j)$ is 0, otherwise $i_{\nu,j}=0$.
One easily computes that for $p\ne 0$ one has $[d_1,h]=\pm p\cdot\id$, which gives the claim. 

It remains ro compute the cokernel of the map $d_1\colon E_1^{-1,0}\to E_1^{0,0}$. 
For any monomial $\omega$ in $E_1^{-1,0}$ the multiplicities of all colors except for a single color $\lambda$ are equal to 1, whence the multiplicity of $\lambda$ is 2. Assume that the color $\lambda$ appears with multiplicities 1 in $W^{-1}(i)$ and $W^{-1}(j)$, $0\le i<j\le \ell$. Denote by $\omega(\lambda,i)$ the monomial obtained from $\omega$ by removing the color $\lambda$ in $W^{-1}(i)$, define similarly $\omega(\lambda, j)$. Then 
$$
d_1(\omega)=\omega(\lambda,i)-\omega(\lambda,j)
$$
Thus, in the cokernel one can move {\it any color} from {\it any} $W^{-1}(i)$ (where it has multiplicity 1) to {\it any} other $W^{-1}(j)$.

Consequently, we can move all colors to $W^{-1}(j)$, for some fixed $j$, so that $W^{-1}(i)=\varnothing$ for $i\ne j$, and get the same element in $H^0$. Here $j$ can be arbitrary, this is we get the coinvariant description by the cyclic group $\mathbb{Z}_{\ell+1}$, replacing $j$ by any other $j^\prime$.

\qed

\section{\sc Reminder on the twisted tensor product 2-operad}
For small dg categories over a field $\k$, the {\it twisted tensor product} $C\sotimes D$ [Sh1] is defined as the left adjoint to the functor $D\mapsto \Coh_\dg(D,E)$ (for fixed small dg category $E$ over $\k$):
$$
\Hom(C\sotimes D,E)\simeq \Hom(C,\Coh_\dg(D,E))
$$
Here the dg category $\Coh_\dg(D,E)$ is defined as follows: its objects are dg functors $F\colon D\to E$, and the morphisms $F\to G$ are defined as the coherent natural transformations $F\Rightarrow G$. The latter is given, by definition, by the Hochschild cochains of $C$ with coefficients in the $C$-bimodule $D(F(-),G(=))$. 

The dg category $C\otimes D$ is explicitly defined in [Sh1], it has $\Ob(C)\times \Ob(D)$ as its objects, and the morphisms are {\it generated} by $\Mor(C)\otimes \id_Y$, $\id_X\otimes \Mor(D)$, $X\in\Ob(C), Y\in\Ob(D)$, and the new morphisms
$$
\varepsilon(f; g_1,\dots,g_n)\in (C\sotimes D)(X_0\times Y_0, X_1,Y_n)
$$
of degree $\deg f+\sum_{i=1}^n\deg g_i-n$, where $f\in C(X_0,X_1)$, and $g_i\in D(Y_{i-1},Y_i)$ form a composable chain of morphisms. 

The differential of $\varepsilon(f; g_1,\dots,g_n)$ is given by a Hochschild-like formula, which for $n=1$ gives
$$
[d,\varepsilon(f;g)]=(\id_{X_1}\otimes g)\circ (f\otimes \id_{Y_0})-(-1)^{|f||g|}(f\otimes \id_{Y_1})\circ (\id_{X_0}\otimes g)
$$
We also impose some relations on these generator morphisms, among which the most non-trivial is
$$
\varepsilon(f_2\circ f_1; g_1,\dots,g_n)=\sum_{i=0}^n\pm \varepsilon(f_2; g_{i+1},\dots,g_n)\circ \varepsilon(f_1; g_1,\dots,g_i)
$$
The reader is referred to [Sh1] for the complete list of relations.

It is proven [Sh2, Section 3] that the twisted tensor product makes the category of small dg categories over $\k$ a {\it skew-monoidal category} [S], which basically means that there is a one-sided associativity 
\begin{equation}\label{eqskew}
(C\sotimes D)\sotimes E\to C\sotimes (D\sotimes E)
\end{equation}
as well as one-sided unit maps
$$
\lambda_C\colon I\sotimes C\to C{\ \ \ \text{and}\ \ \ }\rho_C\colon C\to C\otimes I
$$
(here $I$ is the unit category, whose objects consist of a single element $*$, and $I(*,*)=\k$).
We emphasize that \eqref{eqskew} is not an equivalence nor  a weak equivalence (the unit maps are equivalences in our situation, but they are not assumed to be so in the formalism of skew-monoidal categories). These maps are subject to a list of axioms [S], [Sh2, Section 3]. 

We also proved [Sh1,Th.2.4]:
\begin{prop}\label{prophomotopy}
Let $C,D$ be small dg categories, cofibrant for the Tabuada closed model structure. Then the natural dg functor $p\colon C\sotimes D\to C\otimes D$, sending all $\varepsilon(f; g_1,\dots,g_n)$ to 0, is a quasi-equivalence. 
\end{prop}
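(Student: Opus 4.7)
The goal is to show that for cofibrant $C,D$, the functor $p\colon C\sotimes D\to C\otimes D$ is a quasi-equivalence. Since $p$ is bijective on objects, it suffices to prove that on each Hom-complex
\[
p_{X_0Y_0,X_1Y_1}\colon (C\sotimes D)((X_0,Y_0),(X_1,Y_1))\to (C\otimes D)((X_0,Y_0),(X_1,Y_1))
\]
is a quasi-isomorphism. The plan is to reduce to the semi-free case and then exhibit an explicit filtration whose associated graded is a bar-type resolution.

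First, I would reduce the problem to the case where $C$ and $D$ are \emph{semi-free} dg categories, i.e.\ cofibrant objects of the standard kind (where each Hom-complex is freely generated as a graded module by a set of basic morphisms). Both $-\sotimes -$ and $-\otimes -$ send componentwise quasi-equivalences between cofibrant dg categories to quasi-equivalences; for $-\sotimes-$ this follows from the adjunction with $\Coh_\dg$, since $\Coh_\dg(D,-)$ preserves weak equivalences on cofibrant $D$. So a standard argument lets us replace $C,D$ by semi-free replacements and work with generator-and-relation presentations.

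Second, on $(C\sotimes D)((X_0,Y_0),(X_1,Y_1))$ I would introduce the ascending filtration $F_\bullet$ by \emph{bar degree}, where the bar degree of a composite of generators $\varepsilon(f;g_1,\dots,g_n)$, $f\otimes\id$, $\id\otimes g$ is the sum of the $n$'s appearing (with $f\otimes\id=\varepsilon(f;\,)$ and $\id\otimes g=\varepsilon(\id;g)$ not counted, depending on convention — the precise convention is chosen so that $F_0$ picks out exactly the pure tensors). The crucial associativity-type relation
\[
\varepsilon(f_2\circ f_1;g_1,\dots,g_n)=\sum_{i=0}^n \pm\,\varepsilon(f_2;g_{i+1},\dots,g_n)\circ\varepsilon(f_1;g_1,\dots,g_i)
\]
is homogeneous of bar-degree $n$, so $F_\bullet$ is well-defined on the quotient. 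The quotient $F_0$ of this filtration is exactly the ordinary tensor product $(C\otimes D)((X_0,Y_0),(X_1,Y_1))$, and $p$ is just projection to this piece.

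Third, I would analyse the associated graded $\gr_k$ for $k\geq 1$. The differential formula for $\varepsilon(f;g_1,\dots,g_n)$ has an ``internal'' part of bar-degree $n{-}1$ (the Hochschild-like contraction $\sum\pm\varepsilon(f;g_1,\dots,g_i\circ g_{i+1},\dots,g_n)$) and ``boundary'' terms $(\id\otimes g_1)\circ\varepsilon(f;g_2,\dots,g_n)\pm\varepsilon(f;g_1,\dots,g_{n-1})\circ(\id\otimes g_n)$ of bar-degree $n{-}1$ plus an outer $f\otimes\id$. On $\gr_k$ the compositions by $f\otimes\id$ and $\id\otimes g$ act as simplicial face operators in the appropriate bar-slots; the relations above identify this associated graded complex, for fixed boundary data, with the normalised two-sided bar construction $B(C(X_0,X_1),D,D(Y_0,Y_n))$ (in the semi-free setting, this is a tensor product of the shifted reduced bar complex of $D$ with the generators). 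This bar construction is acyclic by the standard extra-degeneracy contracting homotopy $h$ inserting $\id_{Y_0}$ or $\id_{Y_n}$ into the leftmost or rightmost slot, giving $[d,h]=\id$ on $\gr_k$ for $k\geq 1$.

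Fourth, since the filtration $F_\bullet$ is exhaustive and bounded below on each Hom-complex (by the word length of generators), the spectral sequence converges and degenerates to $F_0=(C\otimes D)((X_0,Y_0),(X_1,Y_1))$, proving that $p$ induces an isomorphism on cohomology. The main technical obstacle is verifying carefully that the full list of relations imposed in [Sh1] on the generators $\varepsilon(f;g_1,\dots,g_n)$ are all bar-degree preserving and that the resulting associated graded is honestly the bar construction — in particular, one must confirm that no relation forces cancellations in $F_0$ beyond those already present in $C\otimes D$. Once this bookkeeping is carried out, the contracting homotopy on $\gr_k$ finishes the proof.
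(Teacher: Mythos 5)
First, a point of reference: the paper does not actually prove this proposition --- it is quoted from [Sh1, Th.~2.4] and the proof is deferred there, so there is no internal argument to measure yours against. Your overall strategy (reduce to semi-free categories, then contract the kernel of $p$ by recognizing bar resolutions of $D$) is the right one in spirit, but the execution of the filtration argument contains a genuine error.

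The problem is in steps 2--4. With the ascending filtration by total bar degree, the induced differential on $\gr_k$ consists \emph{only} of the bar-degree-preserving part of $d$, namely the internal terms $\varepsilon(df;g_1,\dots,g_n)$ and $\varepsilon(f;\dots,dg_i,\dots)$; the Hochschild contractions and the two boundary terms all \emph{lower} bar degree by one and therefore only appear as $d_1$ on the $E_1$ page. Hence $\gr_k$ is not a bar construction, and the assertion that $[d,h]=\id$ on $\gr_k$ for $k\ge 1$ cannot be correct as stated: the extra degeneracy $h$ raises bar degree by one, so it does not even act on a single associated-graded piece --- the identity $[d,h]=\id$ holds only on the augmented \emph{total} bar complex. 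Relatedly, a two-sided bar construction $B(M,D,N)$ is contracted by an extra degeneracy only when one of $M,N$ is a free (representable) $D$-module; the object you write, $B(C(X_0,X_1),D,D(Y_0,Y_n))$, exhibits no such module (and $C(X_0,X_1)$ is not a $D$-module at all), whereas the complex that actually arises, already for $C=I_1$, is $B(D(-,Y_1),D,D(Y_0,-))$ with \emph{representable} outer modules, contracted by inserting $\id_{Y_0}$ on the free side. A filtration that genuinely works is by the number of $C$-generator letters in a word (compatible with the semi-free tower of $C$), for which the associated graded is an iterated tensor product over $D$ of copies of $B(D,D,D)$, each honestly contractible before any passage to cohomology. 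Two further gaps: the reduction to semi-free $C,D$ via ``homotopy invariance of $\sotimes$'' is close to circular (the clean route is that cofibrant dg categories are retracts of semi-free ones and quasi-equivalences are closed under retracts); and the PBW-type statement that the relations of [Sh1] leave the expected basis of $(C\sotimes D)(-,-)$ --- on which every identification above rests --- is flagged by you but never established.
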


\qed

Recall that the interval dg categories $I_n$, having $n+1$ objects ans $I_n(a,b)=\k$ when $a\le b$ and $=0$ otherwise, are cofibrant. 

Set 
$$
I_{n_1,\dots, n_k}=I_{n_k}\sotimes (I_{n_{k-1}}\sotimes (\dots \sotimes (I_{n_2}\sotimes I_{n_1})\dots))
$$

The twisted product 2-operad $\mathcal{O}$ is defined by its components 
$$\mathcal{O}(D)=I_{n_k}\sotimes (I_{n_{k-1}}\sotimes (\dots \sotimes (I_{n_2}\sotimes I_{n_1})\dots))(\min,\max)$$
where $D=(n_1,\dots,n_k)$ is a 2-diagram (see Section ???), $\min=(0,\dots,0), \max=(n_k,\dots,n_1)$ are the minimum and maximum objects of 
$I_{n_1,\dots, n_k}$. 

The 2-operadic compositions on the components $\mathcal{O}(D)$ are basically given by ``plugging", see [Sh2, Section 5]. 
The 2-operadic associativity follows from an analogue of the MacLane coherence theorem for skew-monoidal categories. In general, the statement of this coherence is far more complicated than its classical counter-part [LS]. Luckily, in the particular case when the unit maps $\lambda$ and $\rho$ are {\it isomorphisms}, the coherence statement is precisely the same as for the case of monoidal categories (which means that the associator is an isomorphism), see [Sh2, Prop. 3.6]. Such skew-monoidal categories are called {\it perfect} in [Sh2]. Thus, the perfectness of the skew-monoidal category of small dg categories with the twisted tensor product is essentially employed in the proof of associativity. 

We show that the twisted tensor product 2-operad acts on the dg 2-quiver $\Cat_\dg^\coh(\k)$, [Sh2, Section 5.5]. 

The following statement is easily deduced from Proposition \ref{prophomotopy}:

\begin{prop}\label{ocontr}
There is a natural map of 2-operads $p\colon \mathcal{O}\to\underline{\k}$ which is a quasi-isomorphism of dg 2-operads. 
\end{prop}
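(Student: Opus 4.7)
The statement has two parts: constructing a natural map $p \colon \mathcal{O} \to \underline{\k}$ of dg 2-operads, and showing it is arity-wise a quasi-isomorphism. I will address them in this order.

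For the construction of $p$, I would use the collapse dg functor $\pi_n \colon I_n \to I$ sending all objects to the unique object $*$ of the unit category $I = I_0$ and all generating morphisms to $\id_* \in \k$. Each $\pi_n$ is a quasi-equivalence, and by bifunctoriality of $\sotimes$ it induces a dg functor
\[
\pi_D \colon I_{n_k} \sotimes \bigl(I_{n_{k-1}} \sotimes \cdots \sotimes I_{n_1}\bigr) \to I \sotimes \bigl(I \sotimes \cdots \sotimes I\bigr).
\]
Because the skew-monoidal structure is perfect when the arguments are $I$ (the unit maps $\lambda_I$, $\rho_I$ being isomorphisms), the iterated twisted product on the right collapses canonically to $I$. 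Evaluating at $(\min,\max)$ yields $p(D) \colon \mathcal{O}(D) \to I(*,*) = \k$. The 2-operadic compatibility is verified from the ``plugging'' description of composition in $\mathcal{O}$ given in [Sh2, Section 5], using the naturality of $\pi_n$ together with the coherence of the unit isomorphisms (a perfect skew-monoidal coherence is applicable here, Proposition 3.6 of [Sh2]).

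For the quasi-isomorphism claim I would induct on $k$, the number of columns of $D = (n_1,\dots,n_k)$. The base case $k=1$ is trivial, since $\mathcal{O}((n_1)) = I_{n_1}(\min,\max) = \k$ and $p$ is the identity. For the inductive step, abbreviate $X = I_{n_{k-1}} \sotimes \cdots \sotimes I_{n_1}$. I would assemble the following zig-zag of quasi-isomorphisms of complexes:
\[
\mathcal{O}(D) = (I_{n_k} \sotimes X)(\min,\max) \xrightarrow{\sim} (I_{n_k} \otimes X)(\min,\max) = I_{n_k}(\min,\max) \otimes_\k X(\min,\max) \xrightarrow{\sim} \k \otimes_\k \k = \k.
\]
The first arrow is the comparison map of Proposition \ref{prophomotopy} applied to the pair $(I_{n_k}, X)$; the middle equality is the standard identification of Hom-complexes in the ordinary tensor product of dg categories; and the final arrow is the inductive hypothesis applied to $X$. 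Checking that the composite coincides with $p(D)$ amounts to chasing the definition of $p$ through the projection functor sending all $\varepsilon$-morphisms to zero, which is immediate.

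The main obstacle is that Proposition \ref{prophomotopy} requires \emph{both} factors to be cofibrant in the Tabuada model structure, and while $I_{n_k}$ is cofibrant, the iterated twisted product $X$ is not obviously so. I would address this in one of two ways: either verify directly that $X$ admits a semi-free presentation (exploiting the explicit generators-and-relations description of $\sotimes$ recalled above, together with the fact that each $I_n$ is semi-free over the poset quiver $0 \to 1 \to \cdots \to n$), or replace $X$ by a cofibrant resolution $\widetilde{X} \to X$ and use that $\sotimes$ preserves quasi-equivalences between cofibrant arguments (a fact provable along the same lines as Proposition \ref{prophomotopy} by analysing the explicit generator structure). Either path secures the first arrow of the zig-zag, after which the induction runs without further subtleties and yields Proposition \ref{ocontr}.
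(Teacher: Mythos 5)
Your proposal is correct and takes essentially the same approach as the paper, which gives no details beyond stating that the result is ``easily deduced from Proposition \ref{prophomotopy}'' and citing [Sh2, Prop.~5.7]; your induction on the number of columns via the comparison functor $C\sotimes D\to C\otimes D$ is exactly that deduction made explicit. The cofibrancy issue you flag is real but left implicit in the paper, and your proposed fix (a semi-free presentation of the iterated twisted product extracted from its generators-and-relations description) is the intended resolution.
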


\qed

See [Sh2, Prop. 5.7]. Here the dg 2-operad $\underline{\k}$ is defined as $\underline{\k}(U)=\k$ with the tautological compositions.

\section{\sc $\Nor(\mathbf{Breq})$ is isomorphic to the twisted tensor product operad $\mathcal{O}$}
In this Section, we identify the dg 2-operad $\Nor(\mathbf{Breq})$ with dg 2-operad $\mathcal{O}$ defined via the twisted tensor product, see Section 5. As a consequenc of this identification and Proposition \ref{ocontr}, we get another proof of the contractibility of the dg condensation $|\mathbf{seq}|$ of the $\Delta$-colored Tamarkin 2-operad $\mathbf{seq}$. 

One has

\begin{prop}
The twisted tensor product 2-operad $\mathcal{O}$ is isomorphic to the 2-operad $\Nor(\mathbf{Breq})$.
\end{prop}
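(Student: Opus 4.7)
The plan is to construct an explicit isomorphism $\Phi \colon \Nor(\mathbf{Breq}) \to \mathcal{O}$ by matching bases on both sides and then verifying that $\Phi$ respects the differentials and the 2-operadic compositions. For a 2-disk $U=(n_1,\dots,n_k)$, the basis of $\Nor(\mathbf{Breq})(U)$ is indexed by non-degenerate total orderings of $\sqcup_{\nu\in\mathcal{F}(U)} I_\nu$ satisfying conditions (1)-(3) of Section \ref{tamopapp1}. As highlighted by the computations of Section \ref{sectionb3}, non-degeneracy is very restrictive: the copies of any given color are separated, via conditions (2) and (3), into contiguous blocks, each associated with a single ``outer'' color, giving a recursive brace pattern of the form $\lambda X_1 \lambda X_2 \lambda \dots X_{m-1}\lambda$ at every level.

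To define $\Phi$ I read such a brace configuration as a canonical composition of generators of the twisted tensor product. For each elementary arrow $f\in I_{n_i}$ appearing with multiplicity $m$ in the configuration, produce a twisting generator $\varepsilon(f; h_1,\dots,h_{m-1})$, where $h_1,\dots,h_{m-1}$ are the interstitial chains between successive occurrences of $f$, themselves interpreted recursively as morphisms of the inner twisted tensor product $I_{n_{i-1}}\sotimes\dots\sotimes I_{n_1}$. External interstitials, namely pieces of the configuration lying before the first or after the last occurrence of $f$, or between blocks of different colors, compose as pure pass-through morphisms $\id\otimes h$. The degree bookkeeping matches: $\deg \varepsilon(f; h_1,\dots,h_{m-1}) = \deg f + \sum_j \deg h_j - (m-1)$, which aggregated over all colors gives $-\sum_\nu(|I_\nu|-1)$, coinciding with the degree of the configuration.

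Bijectivity of $\Phi$ in each bidegree will be shown by putting morphisms $\min\to\max$ of the iterated twisted tensor product into the same normal form. The relation $\varepsilon(f_2\circ f_1; g_1,\dots,g_n)=\sum_i \pm\,\varepsilon(f_2; g_{i+1},\dots,g_n)\circ\varepsilon(f_1; g_1,\dots,g_i)$ factors every $\varepsilon$ into ones with elementary outer arrow; an analogous reduction on the inner chain of $g_i$'s yields representatives with elementary inner arrows at the deepest level. The argument then proceeds by induction on $k$: the base case $k=1$ is trivial since $I_{n_1}(\min,\max)$ is one-dimensional, and the inductive step uses the defining adjunction $\Hom(C\sotimes D,E)=\Hom(C,\Coh_\dg(D,E))$ together with the coherence for perfect skew-monoidal categories (Proposition 3.6 of [Sh2]) to identify morphisms in $I_{n_k}\sotimes(\text{inner})$ with Hochschild-style data on the inner twisted tensor product; this data matches exactly the outer-level brace patterns of a configuration.

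Compatibility with differentials is direct: the Hochschild-style formula for $[d,\varepsilon(f;g_1,\dots,g_n)]$ either composes $f$ with a neighbouring $g$ (giving an element with one fewer $\varepsilon$-argument) or pairs two adjacent $g_i$'s, matching the cosimplicial face-map differential on $\Nor(\mathbf{Breq})$, which likewise merges two consecutive occurrences of a color. Compatibility with the 2-operadic compositions is verified by tracing the definitions through the whiskering map $w$: the composition on $\mathcal{O}$ is plugging inner twisted tensor products into outer ones via the skew-monoidal structure, while the composition on $\Nor(\mathbf{Breq})$ is defined in Section \ref{tamopapp2} via $|\mathbf{seq}|$ using the ``insert intervals'' rule built into $w$, and these two descriptions match term by term. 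The main obstacle is the normal-form bijection for $k\ge 3$: the relations in the iterated twisted tensor product are layered, and the inductive step must carefully track how coherence of the skew-monoidal structure intertwines the outer $\varepsilon$'s with the inductive normal form of the inner factor; this is where the perfectness of the twisted tensor product is essential.
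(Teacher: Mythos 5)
Your construction is essentially the same as the paper's: you build the map by reading each non-degenerate brace configuration $\lambda X_1\lambda\dots X_\ell\lambda$ as a composite of generators $\varepsilon(f;\id\otimes\phi(X_1),\dots,\id\otimes\phi(X_\ell))$, pass-throughs $\id\otimes\phi(Y)$ and whiskers $f_j\otimes\id$, inducting on the number of columns, exactly as in the paper's proof. Your added detail on bijectivity via normal forms and on compatibility with differentials and compositions fleshes out steps the paper only asserts, but the route is the same.
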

\begin{proof}
We assign to each monomial $\omega$ in $\Nor(\mathbf{Breq})(U)$ an element of $\mathcal{O}(U)$, in an inductive way. 
Assume $U=(n_1,\dots,n_{k-1},n)$, $U_0=(n_1,\dots,n_{k-1})$, $C=I_{n_{k-1}}\sotimes(I_{n_{k-2}}\sotimes(\dots(I_{n_2}\sotimes I_{n_1})\dots))$. We assume that we have already constructed a map $\phi\colon \Nor(\mathbf{Breq})(V)\to\mathcal{O}(V)$, for $V$ a diagram with $\le (k-1)$ columns, and construct a map $\Phi\colon \Nor(\mathbf{Breq})(U)\to\mathcal{O}(U)=(I_n\sotimes C)(\min,\max)$. 

Denote by $\lambda_1<\dots<\lambda_n$ the colors corresponded to the intervals of $[n]$, denote by $\sharp_i$ the multiplicity of $\lambda_i$ in $\sqcup_\nu I_\nu$. The idea is to assign to a fragment of $\omega$ 
\begin{equation}\label{eqqr1}
\lambda_i X_1 \lambda_i X_2 \lambda_i\dots X_\ell \lambda_i, \ \ \sharp_i>1
\end{equation}
with $\ell=\sharp_i-1$, where $X_1,\dots, X_\ell $ contains only colors from from $U_0$, the 
factor
\begin{equation}\label{eqqr2}
\varepsilon(f_i; \id\otimes \phi(X_1),\dots,\id\otimes \phi(X_\ell))\in I_{n_{k}}\sotimes(I_{n_{k-1}}\sotimes(\dots(I_{n_2}\sotimes I_{n_1})\dots))
\end{equation}
(here $f_i$ is the generator of $I_n$ corresponded to the color $\lambda_i$).

The monomial $\omega$ is subdivided into the union of the fragments of the following 3 types: (a) fragments as in \eqref{eqqr1}, when $\sharp_i>1$; one assigns to it the element \eqref{eqqr2} of  $I_{n_{k}}\sotimes(I_{n_{k-1}}\sotimes(\dots(I_{n_2}\sotimes I_{n_1})\dots))$, (b) elements $\lambda_j$ for $\sharp_j=1$; one assigns to them the corresponding $f_j\otimes\id$, (c) elements $Y$ containing only colors from $U_0$, one assigns to them $\id\otimes\phi(Y)$. After that, we take the composition of the assigned morphisms, over fragments of $\omega$. Clearly this composition is a morphism from $\min$ to $\max$ in $I_{n_{k}}\sotimes(I_{n_{k-1}}\sotimes(\dots(I_{n_2}\sotimes I_{n_1})\dots))$, and, hence, an element in $\mathcal{O}$. 

This inductively defined map $\Nor(\mathbf{Breq})(U)\to\mathcal{O}(U)$ is an isomorphism of complexes. 
One can see directly that it agrees with the operadic composition. 
\end{proof}

\bigskip

\noindent{\small
 {\sc Euler International Mathematical Institute\\
10 Pesochnaya Embankment, St. Petersburg, 197376 Russia }}

\bigskip

\noindent{{\it e-mail}: {\tt shoikhet@pdmi.ras.ru}}

\end{document}